\documentclass[usenames,dvipsnames,svgnames]{article}
\usepackage{amsmath,bm}
\numberwithin{equation}{subsection}
 
\usepackage{algorithm}
\usepackage{algpseudocode}
\usepackage{amssymb}
\usepackage{graphicx}
\usepackage{color}
\usepackage{cite}
\usepackage{amsthm}
\usepackage{url, verbatim}
\newtheorem{theorem}{Theorem}[section]
\newtheorem{lemma}[theorem]{Lemma}

\let\counterwithout\relax
\let\counterwithin\relax
\usepackage{chngcntr}
\counterwithout{equation}{subsection}
\counterwithin{equation}{section}

\theoremstyle{remark}

\usepackage[left=1.30in,right=1.30in,top=1.5in]{geometry}

\usepackage{bbm,mathabx}

\usepackage{array,multirow,booktabs} 

\usepackage{amsfonts}
\usepackage{algorithmicx}
\usepackage{algpseudocode}
\usepackage{epstopdf}
\usepackage{mathrsfs,mathtools}
\usepackage{enumitem}
\usepackage{chngcntr}

\usepackage[usenames,dvipsnames,svgnames]{xcolor}
\newcommand{\annote}[1]{{\leavevmode\color{Black}{#1}}}

\makeatletter
\def\BState{\State\hskip-\ALG@thistlm}
\makeatother

\newcommand{\bs}[1]{\boldsymbol{#1}}
\newcommand{\bmu}{\bs{\mu}}

\newcommand{\dx}[1]{\mathrm{d} #1}

\newcommand{\R}{\mathbbm{R}}

\newcommand{\calC}{\mathcal{C}}
\newcommand{\calR}{\mathcal{R}}
\newcommand{\calO}{\mathcal{O}}
\newcommand{\calQ}{\mathcal{Q}}

\newcommand{\calD}{\mathcal{D}}

\newcommand{\calL}{\mathcal{L}}
\newcommand{\calN}{\mathcal{N}}

\DeclareMathOperator*{\argmax}{argmax}

\newcommand{\naive}{na\"{\i}ve}

\begin{document}

\title{A robust {error estimator} and a residual-free {error indicator} for reduced basis methods}
\author{Yanlai Chen\thanks{
Department of Mathematics, University of Massachusetts Dartmouth, 285 Old Westport Road, North Dartmouth, MA 02747, USA. Emails: {\tt{\{jjiang, yanlai.chen\}@umassd.edu}}. J.~Jiang and Y.~Chen were partially supported by National Science Foundation grant DMS-1719698.
}
\and
Jiahua Jiang \footnotemark[1]
\and
Akil Narayan\thanks{
Scientific Computing and Imaging (SCI) Institute and Department of Mathematics, University of Utah, 72 S Campus Drive, Salt Lake City, UT 84112, USA. Email: {\tt{akil@sci.utah.edu}.} A.~Narayan is partially supported by NSF DMS-1720416 and AFOSR FA9550-15-1-0467.}
}

\date{}
\maketitle

\begin{abstract}
The Reduced Basis Method (RBM) is a rigorous model reduction approach for solving parametrized partial differential equations. It identifies a low-dimensional subspace for approximation of the parametric solution manifold that is embedded in high-dimensional space. A reduced order model is subsequently constructed in this subspace. RBM relies on residual-based error indicators or {\em a posteriori} error bounds to guide construction of the reduced solution subspace, to serve as a stopping criteria, and to certify the resulting surrogate solutions. Unfortunately, it is well-known that the standard algorithm for residual norm computation suffers from premature stagnation at the level of the square root of machine precision. 

In this paper, we develop two alternatives to the standard offline phase of reduced basis algorithms. First, we design a robust strategy for computation of residual error indicators that allows RBM algorithms to enrich the solution subspace with accuracy beyond root machine precision. Secondly, we propose a new error indicator based on the Lebesgue function in interpolation theory. This error indicator does not require computation of residual norms, and instead only requires the ability to compute the RBM solution. This residual-free indicator is rigorous in that it bounds the error committed by the RBM approximation, but up to an uncomputable multiplicative constant. Because of this, the residual-free indicator is effective in choosing snapshots during the offline RBM phase, but cannot currently be used to certify error that the approximation commits. However, it circumvents the need for \textit{a posteriori} analysis of numerical methods, and therefore can be effective on problems where such a rigorous estimate is hard to derive.
\end{abstract}

\section{Introduction}

The fundamental reason that many model reduction approaches for parametric partial differential equations (PDE) are successful is that, for many PDE of interest, the solution manifold induced by the parametric variation has small and rapidly-decaying Kolmogorov n-width \cite{MR774404}. Among the reduction strategies that utilize this fact is the {greedy approach} to the Reduced Basis Method (RBM). It identifies a small set of representative points in parameter space, and obtains solution to the PDE at these points. The construction of this point set proceeds via a greedy algorithm that relies on an \textit{a posteriori} error estimate for guidance. This solution ensemble on this small set is typically assembled from iterated queries to a potentially expensive existing solver.  

{The topic of this paper is the greedy approach for reduced basis methods.} In such cases, there are \textit{offline} and \textit{online} stages for RBM algorithms \cite{MR804937, MR616719, MR917452, MR2824231, MR3129759}, \annote{see also the recent monologue  {\cite{HesthavenRozzaStammBook, quarteroni2015reduced, benner2017model}}}. During the offline stage, significant computational effort is invested so that the online stage, when reduced order solutions for arbitrary parameter values are computed, can be efficient. During the offline stage, the parameter dependence is inspected and the greedy algorithm {which the RBM methods mainly rely on} judiciously selects a small number of parameter values on which the full-order, expensive PDE solver is employed to obtain so-called solution \textit{snapshots}. During the online stage, a surrogate solution is efficiently computed for any new parameter value as a linear combination of these stored snapshots. The coefficients of this linear combination are computed via a reduced-order formulation of the PDE. This reduced solve can usually be completed with orders of magnitude less effort than a full PDE solve, and thus RBM achieves significant speedup when both the offline phase is not too expensive and when multiple online queries are utilized.

A critical component in RBM algorithms is the \textit{a posteriori} error estimate, which dictates the adaptive sampling criterion in the offline greedy algorithm. This error estimate is the main concern of the current paper. The offline phase of the RBM algorithm finds a parameter value that maximizes the numerically-computed error estimate. Therefore, the accuracy (or the lack thereof) in this calculation dictates the accuracy of the RBM solution. 
Due to standard computational implementations of the RBM error estimate for elliptic PDE, the accuracy of this estimate stagnates around the level of the square root of the machine accuracy. Therefore, more accurate schemes for calculating the error estimate are necessary if one demands higher accuracy or when the query is close to the part of the parameter domain where resonances occur (and the stability constant approaches zero). 
To our knowledge, there \annote{are two} previous attempts to resolve this issue \cite{Casenave2012, Casenave2014_M2AN} \annote{and \cite{Ohlberger2014}}. 
\annote{The method in \cite{Casenave2012, Casenave2014_M2AN} employs} an additional sampling of the parameter domain, potentially randomly, to generate a linear system to solve online for the stable calculation of the \textit{a posteriori} error estimate. This is improved in \cite{Casenave2014_M2AN} by the empirical interpolation method. These approaches increase the Offline and Online cost, and may suffer from ill-conditioning depending on the additional sampling. 
\annote{On the other hand, {\cite{Ohlberger2014, canuto2009posteriori}} presents a strategy that amends a direct computation of the \textit{a posteriori} error estimate by rewriting it in a new form; the authors there show that their approach can circumvent stagnation errors due to floating-point arithmetic. {Thus, the loss of accuracy of half of the digits caused by taking the square root is not new. But with} 
this understanding, we devise a new approach for computing an \textit{a posteriori} error estimate, and demonstrate that it can be used to circumvent loss of significance from floating-point arithmetic. Our method performs similarly to \cite{Ohlberger2014}, but in cases where a matrix of residual vectors is rank-deficient, our approach is more efficient.} {We focus on the standard error criterion for selecting snapshots in this paper, that is, error in the solution, but note that goal-oriented strategies exist \cite{janon2016goal,hoang_efficient_2015,jiang_goal-oriented_2016}.}

The main contributions of this paper are twofold, in the theoretical and algorithmic design of robust residual-based, and residual-free error estimates for the offline RBM phase. Our first contribution is to the standard, residual-based method. We design and test a novel computational strategy for residual-based RBM error estimators that is capable of delaying error stagnation until much closer to machine precision. This new strategy computes the residual-based error norms in different ways compared to standard RBM algorithms, but are just as efficient as those algorithms.

Our second contribution to the offline RBM phase is more general and falls into a residual-free category. The efficiency of the error estimate calculation directly determines that of RBM. When the parameter is high-dimensional then the requisite size of the training set is very large, and it is computationally infeasible to repeatedly maximize a residual-based error estimate over the training set. The situation is exacerbated when a standard residual-based RBM error estimate cannot be computed, such as for hyperbolic problems, or in convection-dominated convection-diffusion equations. 
To the best of our knowledge, computational stratagems in the RBM framework to tackle this problem are underdeveloped. We propose and test an error indication strategy that forgoes the residual norm calculation entirely and requires only the RBM coefficients. This new procedure is rigorous and applicable to any parameterized problem without requiring any \textit{a posteriori} error analysis. However, the procedure cannot certify error due to the presence of a scaling constant that we have not been able to compute. Nevertheless, this new error estimate performs comparably to standard RBM algorithms for the examples that we have tried.

{All of the numerical examples in this manuscript investigate parametric PDE's that are relatively standard situations when RBM algorithms are known to perform well. We demonstrate for these cases that our strategies work well, and there is no methodological restriction that prevents our strategies from being applied in more general, difficult cases. However, we leave investigations of our approaches for computationally large-scale and more mathematically challenging parametric PDE's for future work.}

The remainder of this paper is organized as follows. In Section \ref{sec:background}, we provide a brief overview of the RBM framework and the typical setting it is successful for. Close attention is paid to the error estimate calculation, the focus of this paper. In Section \ref{sec:newapproaches}, we detail our two approaches to tackle the afore-mentioned two challenges. Section \ref{sec:results} is devoted to numerical results corroborating the efficiency and accuracy gain of the proposed approaches. Finally, some concluding remarks are given in Section \ref{sec:conclude}.

\section{Reduced basis method: a brief {overview}}
\label{sec:background}
In this section, we present a brief overreview of the Reduced Basis Method (RBM), in particular the error estimate and its implementation in the classical form. 
All of this is standard in the RBM literature. Therefore the reader familiar with RBM may skip this section, referring to Table \ref{tab:notation} for our notation.

\begin{table}
  \begin{center}
  \resizebox{\textwidth}{!}{
    \renewcommand{\tabcolsep}{0.4cm}
    \renewcommand{\arraystretch}{1.3}
    {\scriptsize
    \begin{tabular}{@{}lp{0.8\textwidth}@{}}
      \toprule
      $\bmu$ & Parameter in $\calD \subseteq \R^p$ \\
      $u(\bmu)$ & Function-valued solution of a parameterized PDE \\
      $\mathcal{N}$ & Degrees of freedom (DoF) in PDE ``truth" solver \\
      $X^\calN$ & Truth solver solution space, having dimension $\calN$\\
      $u^{\mathcal{N}}(\bmu)$ & Truth solution (finite-dimensional)\\
      $N$ & Number of reduced basis snapshots, $N \ll \mathcal{N}$\\
      $\bmu^j$ & ``Snapshot" parameter values, $j=1, \ldots, N$\\
      $S^N$ & Sample set $S^N = \{\bmu^1, \dots, \bmu^N\}$\\
      $X^{\mathcal{N}}_{{N}}$ & Span of $u^{\mathcal{N}}\left(\bmu^k\right)$ for $k=1, \ldots, N$\\
      $u_N^{\mathcal{N}}(\bmu)$ & Reduced basis solution, $u_N^{\mathcal{N}} \in X^{\mathcal{N}}_{{N}}$\\
      $e_N(\bmu)$ & Reduced basis solution error, equals $u^{\mathcal{N}}(\bmu) - u_N^{\mathcal{N}}(\bmu)$ \\
      $\Xi_{\rm{train}}$ & Parameter training set, a finite subset of $\mathcal{D}$ \\
      $\Delta_{{N}} \left(\bmu\right)$ & Error estimate (upper bound) for $\left\|e_N\left(\bmu\right)\right\|$ \\
      $\epsilon_{\mathrm{tol}}$ & Error estimate stopping tolerance in greedy sweep \\
    \bottomrule
    \end{tabular}
  }
    \renewcommand{\arraystretch}{1}
    \renewcommand{\tabcolsep}{12pt}
  }
  \end{center}
  \caption{Notation used throughout this article.}\label{tab:notation}
\end{table}

\subsection{Problem setting}
Let $\mathcal{D} \subset \mathbb{R}^{p}$ be the domain for a $p$-dimensional parameter, and let $\Omega \subset \mathbb{R}^{d}$ (for $d = 2 ~ \text{or} ~ 3$) be a bounded physical domain. Let $X$ be a Hilbert space of functions on $\Omega$. Given $\bm{\mu} \in \mathcal{D}$, the goal is to compute $u(\bm{\mu}) \in X$ satisfying
\begin{equation}
\label{eq:sat}
a(u(\bm{\mu}),v; \bm{\mu}) = f(v; \bm{\mu}), \quad v \in X,
\end{equation}
which corresponds to a parametric partial differential equation (pPDE) written in weak form; {$a(\cdot, \cdot; \bmu)$} is a bilinear form and $f$ may encode forcing terms and/or boundary conditions. We assume $u \in X$ and that $H^{1}_{0}(\Omega) \subset X(\Omega) \subset H^{1}(\Omega)$, where
\begin{align*}
  H^1(\Omega) &= \left\{ w \in L^2(\Omega) \; \big| \; \int_\Omega \left\| \nabla w \right\|^2 \dx{x} < \infty \right\}{,} \\
  H^1_0(\Omega) &= \left\{ w \in L^2(\Omega) \; \big| \; \int_\Omega \left\| \nabla w \right\|^2 \dx{x} < \infty \textrm{ and } w|_{\partial \Omega} = 0 \right\}.
\end{align*}
We denote by $(\cdot , \cdot)_{X}$ the inner product associated with the space $X$, whose induced norm $|| \cdot ||_{X} = \sqrt{(\cdot , \cdot)_{X}}$ is equivalent to the usual $H^{1}(\Omega)$ norm. For $\bmu$-uniform well-posedness of \eqref{eq:sat}, we assume that $a(\cdot ,\cdot; \bm{\mu}): X \times X \rightarrow \mathbb{R}$ is continuous and uniformly coercive over $X$ for all $\bm{\mu}$ in $\mathcal{D}$, and that $f(\cdot)$ is a  linear continuous functional over $X$ for all $\bmu$. That is,
\begin{subequations}
\begin{align}\label{eq:con}
  \gamma(\bm{\mu}) &\coloneqq \underset{w \in X}{\sup} ~ \underset{v \in X}{\sup}~ \frac{a(w, v; \bm{\mu})}{||w||_{X}||v||_{X}} < \infty, & \forall \bm{\mu} &\in \mathcal{D},  
\\\label{eq:coe}
\alpha(\bm{\mu}) &\coloneqq \underset{w \in X} \inf \frac{a(w, w; \bm{\mu})}{||w||^2_{X}} \geqslant \alpha_{0} > 0, & \forall \bm{\mu} &\in \mathcal{D}, \\\label{eq:coe_rev}
\sup_{u \in X} \left| f(u;\bmu) \right| &< \infty, & \forall \bmu &\in \mathcal{D}.
\end{align}
\end{subequations}
As is common in the RBM literature \cite{Rozza_Huynh_Patera}, we assume that $a(\cdot, \cdot; \bm{\mu})$ is ``affine'' with respect to the parameter $\bm{\mu}$: I.e., there exist $\bm{\mu}$-dependent coefficient functions $\Theta_{a}^{q}: \calD \rightarrow \mathbb{R}$ for $q = 1, \ldots Q_a$, and $\Theta_f^q: \calD \rightarrow \R$ for $q = 1, \ldots, Q_f$, and corresponding continuous $\bmu$-independent bilinear forms $a^{q}(\cdot, \cdot): X \times X \rightarrow \R$ and linear forms {$f^{q}(\cdot): X \rightarrow \R$}, respectively, such that 
\begin{align}
\label{eq:assum_a}
a(w, v; \bm{\mu}) &= \sum_{q = 1}^{Q_{a}} \Theta_{a}^{q}(\bm{\mu})a^{q}(w,v), &
f(w; \bm{\mu}) &= \sum_{q = 1}^{Q_{f}} \Theta_{f}^{q}(\bm{\mu})f^{q}(w).
\end{align}
There are strategies for situations when the affine assumption is not satisfied, e.g., empirical interpolation \cite{BarraultMaday2004}. {These strategies generally replace a non-affine operator by an affine operator with sufficiently large $Q_a$ and $Q_f$ so that the solution of the affine problem well approximates the solution to the non-affine problem. Note, however, that such an approach can require large $Q_a$ and/or $Q_f$ and that RBM strategies can suffer substantially in computational efficiency in this case.}

In order to compute solutions to \eqref{eq:sat} suppose that for $\bmu$ fixed, a spatial discretization (e.g., a finite element solver) can be employed with $\mathcal{N} \gg 1$ degrees of freedom that computes an approximate solution $u^\calN(\bmu)$ to within an acceptable accuracy for every $\bmu \in \mathcal{D}$. This approximate solution $u^\calN$ is called the ``truth" solution, and the associated spatial discretization and solver is called the ``truth" solver. 

The truth solution $u^{\mathcal{N}}$ is {sought in} an $\calN$-dimensional subspace $X^{\mathcal{N}}$ (i.e., $\dim(X^{\mathcal{N}}) = \mathcal{N}$) that is a discretization of $X$, and \eqref{eq:sat} is discretized as
\begin{equation}\label{eq:update_problem}
\begin{cases}
\text{For } \bm{\mu} \in \mathcal{D},~ \text{find the ``truth'' approximation } u^{\mathcal{N}}(\bm{\mu}) \in X^{\mathcal{N}} ~ \text{such that} \\
a(u^{\mathcal{N}}, v; \bm{\mu}) = f(v; \bm{\mu}) \quad \forall v \in X^{\mathcal{N}}.
\end{cases}
\end{equation}
The other relevant quantities are defined according to the discretization. For example, the coercivity constant \eqref{eq:coe} is approximated by its numerical couterpart $\alpha^{\mathcal{N}}(\bm{\mu})  = {\displaystyle \inf_{w \in X^{\mathcal{N}}}} \frac{a(w, w; \bm{\mu})}{||w||^{2}_{X}}, ~ \forall \bm{\mu} \in \mathcal{D}.$

A \naive{} approach to computing solutions to \eqref{eq:sat} for many $\bmu$ would be to query the truth solver many times, which is expensive under the $\calN \gg 1$ assumption. RBM methods attempt to provide numerical solutions of \eqref{eq:sat} with accuracy comparable to $u^\calN$, but with orders-of-magnitude less computational cost than the truth solver. The essential idea is to project the collection of discrete solutions $u^\calN(\bmu)$ for $\mu \in \mathcal{D}$ onto a low-dimensional representation, and then to efficiently compute this projected representation.

\subsection{RBM framework}

The reduced basis method is a reliable model reduction strategy that replaces  a relatively expensive truth solver \eqref{eq:update_problem} with a less expensive surrogate. 
The best possible accuracy of the surrogate is governed by a theoretical quantity, the Kolmogorov $N$-width of the solution set $U^\calN$, defined as
\begin{align}\label{eq:solution-set}
  U^\calN \coloneqq \left\{ u(\bmu) \;\; |\;\; \bmu \in \mathcal{D} \right\} \subset X^{\mathcal{N}}.
\end{align}
When the $N$-width of $U^\calN$ decays quickly with respect to $N$, an RBM strategy is effective. Practitioners in advance identify a large but finite training set discretizing the parameter domain $\Xi_{\rm train} \subset \mathcal{D}$, and a maximum dimension $N_{\rm max}$ (usually $\ll \mathcal{N}$). An RBM algorithm then approximates the solution space by an $N$-dimensional subspace of $X^{\mathcal{N}}$, denoted by $X^{\mathcal{N}}_{N}$, with $N \leq N_{\rm max}$. 

The surrogate for the truth discretization is denoted $u_N^\calN (\bmu)$ and is computed as a member of  the reduced space $X^{\mathcal{N}}_{N}$.  The space $X^{\mathcal{N}}_{N}$ is constructed in a hierarchical manner as the span of so-called ``snapshots'', by hierarchically constructing a sample set {$S^N = \{\bmu^1, \dots, \bmu^N\}$} from the training set $\Xi_{\rm train}$ and solving \eqref{eq:update_problem} with $\bm{\mu} = \bm{\mu}^{n}$. Explicitly:
\begin{equation}\label{eq:space_define}
X^{\mathcal{N}}_{N} \coloneqq \text{span} \{ u^{\mathcal{N}}(\bm{\mu}^{n}), 1 \leq n \leq N \}, \quad N = 1, \dots, N_{\rm max}.
\end{equation}

Given $\bm{\mu} \in \mathcal{D}$, we define the RBM solution $u_{N}^{\mathcal{N}}(\bmu) \in X^{\mathcal{N}}_{N}$ as the solution to the following {\em reduced} problem
\begin{equation}
\label{eq:reduced_system}
\begin{cases}
\text{For } \bm{\mu} \in \mathcal{D},~ \text{find the RB solution } u_{N}^{\mathcal{N}}(\bm{\mu}) \in X_{N}^{\mathcal{N}} \subset X^{\mathcal{N}}~ \text{such that } \\
a(u_{N}^{\mathcal{N}}, v; \bm{\mu}) = f(v; \bmu) \quad \forall v \in X_{N}^{\mathcal{N}}.
\end{cases}
\end{equation} 
The truth system \eqref{eq:update_problem} is $\mathcal{N}$-dimensional, but the reduced system \eqref{eq:reduced_system} is $N$-dimensional. When $N \ll \mathcal{N}$, this results in a significant computational savings. 
This saving is realized by precomputing and storing the parameter-independent components of the RB ``stiffness'' matrix which is decomposed via the affine assumption \eqref{eq:assum_a}
\begin{equation}\label{eq:rbstiff}
\left(a(v_m , v_n ; \bm{\mu})\right)_{N \times N} = \sum_{q = 1}^{Q_a} \Theta_a^q(\bmu) \left( a^q(v_m , v_n) \right)_{N \times N}. 
\end{equation}
The nested structure of $X^{\mathcal{N}}_{N}$ ($X^{\mathcal{N}}_{1} \subset X^{\mathcal{N}}_{2} \subset \dots \subset X^{\mathcal{N}}_{N_{\rm max}} \subset X^{\mathcal{N}}$) allows us to expand these parameter-independent components  
$\left( a^q(v_m , v_n) \right)_{N \times N}$ by adding one more row and one more column each time a new sample location $\bmu^i$ is selected and the new snapshot resolved.

{
  \subsection{Computational details}
The Galerkin procedure in \eqref{eq:reduced_system} can be transformed into an algorithm by using the following ansatz for the RBM solution:
\begin{equation}\label{eq:rbsolution}
  u_{N}^{\mathcal{N}}(\bm{\mu}) = \sum_{m = 1}^{N} u_{Nm}^{\mathcal{N}}(\bm{\mu}) u^{\mathcal{N}}\left(\bmu^{{m}}\right).
\end{equation}
Using this in \eqref{eq:reduced_system} and choosing the test functions as $v = u^{\mathcal{N}}\left(\bm{\mu}^{(n)}\right)$ for $n = 1, \ldots, N$, results in a solvable linear system for the unknown RB coefficients $\{u_{Nm}^{\mathcal{N}}(\bm{\mu})\}_{m = 1}^{N}$, which defines $u_N^{\mathcal{N}}$. In practice, this strategy tends to produce ill-conditioned systems. To ameliorate this effect, practitioners usually choose an orthonormal basis for $X_N^{\mathcal{N}}$ for both trial and test functions:
\begin{align}\label{eq:rbsolution-xi}
  u_{N}^{\mathcal{N}}(\bm{\mu}) &= \sum_{m = 1}^{N} \widehat{u}_{Nm}^{\mathcal{N}}(\bm{\mu}) \xi_m, & \left\langle \xi_m, \xi_n \right\rangle_X &= \delta_{n,m}, & \left\{ \xi_m \right\}_{m=1}^N \subset X_N^{\mathcal{N}}.
\end{align}
The $\xi_n$ are hierarchically computed by orthogonalizing the snapshots $u^{\mathcal{N}}\left(\bm{\mu}^{(m)}\right)$ each time $S^N$ is updated. We note that computing the $\widehat{u}_{N m}^\calN$ coefficients is mathematically equivalent to computing the $u_{N m}^\calN$ coefficients, as the two are related by a change of basis transformation: The computational representation \eqref{eq:rbsolution-xi} can be transformed into the representation \eqref{eq:rbsolution} (and vice versa) through a linear transformation that we omit for brevity. The representation \eqref{eq:rbsolution-xi} is used in practical implementations of RBM algorithms (both in general and in the examples of this paper), but our discussion will use the formulation \eqref{eq:rbsolution} because this formulation is more convenient to describe our residual-free error indicator in Section \ref{sec:ee3}. 
}

\subsection{Selecting snapshots through the {\em a posteriori} error estimate}

Here we describe the procedure for  selecting the representative parameters $\bmu^1, \ldots, \bmu^N$ for the sample set $S^N$. This is an important task since these parameter choices define the reduced space \eqref{eq:space_define}. 
RBM adopts a greedy scheme to iteratively construct $S^N$, and leans on mathematical theory for the truth discretization \eqref{eq:update_problem}. In particular, there exist efficiently-computable error estimates that quantify the discrepancy between the dimension-$n$ RBM surrogate solution $u^\calN_n(\bmu)$ and the truth solution $u^\calN(\bmu)$.  This error estimate is denoted $\Delta_n$, and satisfies $\Delta_n(\bmu) \geq \left\| u^\calN_n(\bmu) - u^\calN(\bmu)\right\|_{X^{\calN}}$. Assuming existence of this error estimate, the greedy procedure for constructing $S^N$ is summarized in Algorithm \ref{alg:rbmgreedy}.

\begin{algorithm}[H] 
\begin{algorithmic}[1]
  \State Input: training set $\Xi_{\rm train}$, an accuracy tolerance $\varepsilon_{\mathrm{tol}}$, maximum RB dimension $N_{\mathrm{max}}$.
\State Randomly select the first sample $\bmu^1 \in \Xi_{\rm train}$, and set $n = 1$ and $\varepsilon = 2 \varepsilon_{\mathrm{tol}}$.
\State Obtain truth solution $u^\mathcal{N}(\bmu^1)$, and set $X^\calN_1 = \mbox{span}\left\{u^{\mathcal N}(\bmu^1)\right\}$.
\While {$(\varepsilon > \varepsilon_{\mathrm{tol}} \textrm{ and } n < N_{\mathrm{max}})$}
\vspace{0.05in}
\For{each $\boldsymbol{\mu} \in \Xi_{\rm train}$}
\State Obtain RBM solution $u^{\mathcal{N}}_{n}(\boldsymbol{\mu}) \in X^\calN_n$ and error estimate ${\Delta_{n}}(\boldsymbol{\mu})$
\EndFor
\vspace{0.05in}
\State $\bmu^{n+1} = \underset{\bm{\mu} \in \Xi_{\rm train}}{\argmax} \Delta_{n}(\bm{\mu})$, $\varepsilon = \Delta_{n}(\bmu^{n+1})$.
\State Augment the sample set $S^{n+1} = S^n \bigcup \{\bmu^{n+1}\}$ and the RB space $X^\calN_{n+1} = X^\calN_n \oplus \{u^\calN(\bmu^{n+1})\}$.
\State Set $n \leftarrow n+1$.
\vspace{0.05in}
\EndWhile
\end{algorithmic}
\caption{Greedy algorithm for constructing $S^N$ and $X_N^\calN$.}\label{alg:rbmgreedy}
\end{algorithm}

The design and efficient implementation of the error bound $\Delta_n$ is usually accomplished with {\em a posteriori} error estimates from the truth discretization. Mathematical rigor of this estimate is crucial for the accuracy of the reduced basis solution. The main novelties of this paper are replacements of the ``classical" \textit{a posteriori} error estimate with alternatives that are either more efficient or have enhanced accuracy properties. We finish this section by describing how $\Delta_{n}$ is evaluated in the classical fashion. 

The error between the reduced basis surrogate solution and the truth discretization is $e_N(\bm{\mu}) := u^{\mathcal{N}}(\bm{\mu}) - u^{\mathcal{N}}_{N}(\bm{\mu}) \in X^{\mathcal{N}}$. Unfortunately, this is not computable directly without knowledge of $u^\calN$, which we want to avoid computing. However, the linearity of $a(\cdot, \cdot; \bmu)$ results in
\begin{equation}
\label{eq:residual_eq}
a(e_N(\bm{\mu}), v; \bm{\mu}) = r_N(v; \bm{\mu}) \quad \forall v \in X^\calN,
\end{equation}
with the residual $r_N(v; \bm{\mu}) \in (X^{\mathcal{N}})'$ (the dual of $X^{\calN}$) defined as $f(v; \bm{\mu}) - a(u_{N}^{\mathcal{N}}(\bm{\mu}), v; \bm{\mu})$. The Riesz representation theorem and a variational inequality imply that $\lVert e_N(\bm{\mu})\rVert_{{X^{\mathcal{N}}}} \le \frac{\lVert r_N(\cdot; \bm{\mu})\rVert_{(X^{\mathcal{N}})'}}{\alpha^{\mathcal{N}}(\bm{\mu})}$, 
where $\alpha^{\mathcal{N}}(\bm{\mu}) = \underset{w \in X^{\mathcal{N}}}{\inf} \frac{a(w, w, \bm{\mu})}{||w||^2_{X}}$ is the stability (coercivity) constant for the elliptic bilinear form $a$. Therefore, we can define an {\em a posteriori} RBM error estimator as
\begin{equation}\label{eq:error_estimator_en}
  \Delta_{N}(\bm{\mu}) = \frac{\lVert r_N(\cdot; \bm{\mu})\rVert_{(X^{\mathcal{N}})'}}{\alpha^{\mathcal{N}}_{LB}(\bm{\mu})} \geq \left\| e_N(\bmu)\right\|_{X^\calN}{.}
\end{equation}
Here $\alpha^{\mathcal{N}}_{LB}(\bm{\mu})$ is a lower bound for $\alpha^{\mathcal{N}}(\bm{\mu})$ which is expensive to compute directly for all $\bmu$. However, {approaches for computationally efficient estimation of the stability factor $\alpha^{\mathcal{N}}_{LB}(\bm{\mu})$ has been undergoing vast development in \cite{HuynhSCM, HKCHP} and recently in \cite{Chen2015_NNSCM, yano2014space, manzoni2015heuristic}. Furthermore, an offline-online decomposition is also exploited to speed up the evaluation of $\alpha^{\mathcal{N}}_{LB}(\bm{\mu})$.}

The remaining ingredient for efficient computation of the {\em a posteriori} error estimation is the evaluation of $\lVert r_N(\cdot; \bm{\mu})\rVert_{(X^{\mathcal{N}})'}$, which is the chief concern of this manuscript. The following discussion details how this is achieved in a standard RB implementation. The Riesz representation theorem states that we can calculate functions $C^{{\tilde{q}}} \in X^\calN$ and $\mathcal{L}_{m}^{q} \in X^\calN$, for ${1 \le \tilde{q} \le Q_{f},}  1\le m \le N, 1 \le q \le Q_{a}$, such that 
\begin{equation}\label{error_es_problem}
\begin{cases}
  (\mathcal{C}^{{\tilde{q}}}, v)_{X^\calN} = f^{{\tilde{q}}}(v)_{X^{\calN}} \quad \forall v \in X^{\mathcal{N}} \\
  (\mathcal{L}_{m}^{q}, v)_{X^{\calN}} = a^{q}(u^{\calN}\left(\bmu^m\right), v) \quad \forall v \in X^{\mathcal{N}}.
\end{cases}
\end{equation}
With the availability of $\calC^{{\tilde{q}}}$ and $\calL_m^q$, the classical implementation of RBM then adopts an {offline-online} decomposition for $\lVert r_N(\cdot; \bm{\mu})\rVert_{(X^{\mathcal{N}})'}^2$. Combining \eqref{eq:residual_eq}, \eqref{eq:rbsolution}, and \eqref{eq:assum_a}, we have 
\begin{align}\nonumber
  \lVert r_N(\cdot; \bm{\mu})\rVert_{(X^{\mathcal{N}})'}^2 &= 
\sum_{{\tilde{q}_1} = 1}^{Q_f}  \sum_{{\tilde{q}_2} = 1}^{Q_f} \Theta_f^{{\tilde{q}_1}}(\bmu) \Theta_f^{{\tilde{q}_2}}(\bmu) (\mathcal{C}^{{\tilde{q}_1}}, \mathcal{C}^{{\tilde{q}_2}})_{X^{\calN}} + \\
&\sum_{q = 1}^{Q_{a}}\sum_{m = 1}^{N}\Theta_a^{q}(\bm{\mu})u^{\mathcal{N}}_{Nm} \left\{ \sum_{q' = 1}^{Q_{a}}\sum_{m' = 1}^{N}\Theta_a^{q'}(\bm{\mu})u^{\mathcal{N}}_{Nm'}(\mathcal{L}_{m}^{q}, \mathcal{L}_{m'}^{q'})_{X^{\calN}}\right\} \nonumber\\
\label{eq:error_es_quan}
&- 2\sum_{q = 1}^{Q_{a}}\sum_{m = 1}^{N} {\sum_{\tilde{q} = 1 }^{Q_f}} \Theta_a^{q}(\bm{\mu})u^{\mathcal{N}}_{Nm}(\bm{\mu})({\mathcal{C}^{\tilde{q}}}, \mathcal{L}_{m}^{q})_{X} .
\end{align}
Most of the quantities above can be precomputed and stored explicitly from the form of the pPDE.  Therefore, the offline stage is devoted to calculating and storing 
\begin{equation*}
(\mathcal{C}^{{\tilde{q}_1}}, \mathcal{C}^{{\tilde{q}_2}})_{X^{\calN}}, (\mathcal{C}{^{\tilde{q}_1}},\mathcal{L}_{m}^{q})_{X^{\calN}}, (\mathcal{L}_{m}^{q}, \mathcal{L}_{m'}^{q'})_{X^{\calN}}, \,\, 1 \le m, m' \le N_{\rm RB}, 1 \le {{\tilde{q}_1}}, {{\tilde{q}_2}} \le Q_{f}, 1 \le q, q' \le Q_{a}. 
\end{equation*}
During the online stage, given any parameter $\bm{\mu}$, we only need to evaluate $\Theta_a^{q}(\bm{\mu}), \Theta_f^{{\tilde{q}}}(\bmu), u^{\mathcal{N}}_{Nm}(\bm{\mu}), 1 \le m \le N$, and perform the addition and subtraction according to \eqref{eq:error_es_quan}. However, the coefficients $u^\calN_{N m}$ are the RBM expansion coefficients from \eqref{eq:rbsolution}, and therefore evaluation of \eqref{eq:error_es_quan} at each $\bmu$ also requires computation of the RB solution $u^\calN_N$. 
We denote this numerical approximation of $\Delta_n(\bmu)$ in \eqref{eq:error_estimator_en} by ${\mathcal E}_1(\bmu; n)$.

\section{Novel approaches for error quantification}
\label{sec:newapproaches}

As shown by Algorithm \ref{alg:rbmgreedy}, the classical RBM computes the maximum of the error estimator $\Delta_n(\cdot)$ over $\Xi_{\rm train}$ in a brute-force manner. $\Xi_{\rm train}$ is large especially for a high-dimensional parameter domain. Moreover, this maximization procedure must be done for every $n =1, \ldots, N$.
For these two reasons, the process of selecting snapshots is usually the computational bottleneck of RBM algorithms. Another observation is that the dual norm of the residual is computed as a square root of its square which is evaluated as the difference between a sum of two positive terms and a third term. This is prone to loss of significance and one suspects that errors smaller than the square root of machine precision are not computable using the form \eqref{eq:error_es_quan}. This supposition is borne out in numerical results for RBM.

In this section, we provide two novel approaches to mitigate the deficiencies of classical RBM residual estimation. \annote{We begin first by discussing why the classical approach for evaluation of the error estimate via the formula \eqref{eq:error_es_quan} is not ideal when implemented in finite-precision arithmetic.
We follow this by presenting our robust residual-based error estimate, which \textit{directly} evaluates $\lVert r_N(\cdot; \bm{\mu})\rVert_{(X^{\mathcal{N}})'}$ without computing its squared value; this allows the new method to achieve errors much smaller than root machine precision. Finally, we present our residual-free approach that uses a surrogate for the residual that circumvents requirement of computing $\alpha_{LB}(\cdot)$. Thus the second approach is applicable to pPDE where no rigorous \textit{a posteriori} error estimate is available. Our {preliminary} analysis and numerical experiments suggests that the residual-free method is promising, but rigorous theoretical analysis demonstrating its utility in the RBM setting is the subject of ongoing work.}

\annote{
\subsection{Finite-precision limitations for residual norm evaluation}
The formula \eqref{eq:error_es_quan} is an expanded quadratic form for the expression
\begin{align}\label{eq:error-rN}
  \left\|r_N(\cdot;\bmu)\right\|_{(X^\calN)'}^2 = \left\| a(u_N^\calN - u^\calN, \cdot; \bmu) \right\|_{{(}{X^\calN}')}^2 = \left\| a(u_N^\calN, \cdot; \bmu) - f(\cdot; \bmu) \right\|_{(X^\calN)'}^2
\end{align}
At the root of the floating-point stagnation is evaluation of the above quadratic form via the formula
\[
(a - b)^2 = a^2 - 2 a b + b^2.
\]
Indeed, the right-hand side expansion of the above equation is \eqref{eq:error_es_quan}. A simple floating point error analysis reveals the following lemma, where we use $\epsilon$ to denote the machine precision, and $fl(\cdot)$ to denote the floating point representation of a number.
\begin{lemma}
\label{lemma:a_b}
In the case of $b = a + \calO(\epsilon)$, we have that
\[
fl(a^2 - 2 a b + b^2) = \calO(\epsilon){,}
\] 
while 
  \begin{align*}
    fl((a-b)^2) = \calO(\epsilon^2){.}
  \end{align*}
\end{lemma}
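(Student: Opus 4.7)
The plan is to invoke the standard IEEE floating-point arithmetic model, in which every elementary binary operation $\circ \in \{+,-,\times,/\}$ satisfies $fl(x \circ y) = (x \circ y)(1+\delta)$ with $|\delta| \leq \epsilon$. The lemma is implicitly set in the regime $a, b = \calO(1)$ (otherwise the statement is vacuous, as both quantities would already be $\calO(\epsilon^2)$), so I will carry this assumption along with the hypothesis $b = a + \calO(\epsilon)$, which yields $(a-b)^2 = \calO(\epsilon^2)$ exactly.

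For the direct evaluation $fl((a-b)^2)$, I would first bound the subtraction: $fl(a-b) = (a-b)(1+\delta_1)$ with $|\delta_1| \leq \epsilon$. Since $a - b = \calO(\epsilon)$, the \emph{absolute} perturbation introduced is only $\calO(\epsilon) \cdot \calO(\epsilon) = \calO(\epsilon^2)$, i.e.\ $fl(a-b) = (a-b) + \calO(\epsilon^2)$. Squaring produces $fl((a-b)^2) = \bigl((a-b) + \calO(\epsilon^2)\bigr)^2 (1+\delta_2)$, which upon expansion is $(a-b)^2 + \calO(\epsilon^3) = \calO(\epsilon^2)$.

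For the expanded evaluation $fl(a^2 - 2ab + b^2)$, I would observe that each of $fl(a^2), fl(2ab), fl(b^2)$ is of size $\calO(1)$ and carries a \emph{relative} error of $\calO(\epsilon)$; therefore each term has an \emph{absolute} error of size $\calO(\epsilon)$. The subsequent subtractions and additions, again performed in floating point, combine three $\calO(1)$ numbers whose exact sum is $\calO(\epsilon^2)$. By the standard catastrophic-cancellation bound (cf.\ Higham's \emph{Accuracy and Stability of Numerical Algorithms}), the forward error of the sum is governed by the sum of the magnitudes of the inputs, which is $\calO(1)\cdot \epsilon = \calO(\epsilon)$; this dominates the exact value $\calO(\epsilon^2)$ and gives $fl(a^2 - 2ab + b^2) = \calO(\epsilon)$.

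There is no real technical obstacle here; the only care needed is to distinguish \emph{absolute} from \emph{relative} error and to track the cancellation scale properly. The essential point to emphasize is that the mathematically equivalent rewriting changes the subtraction from being applied to two nearly-equal $\calO(1)$ quantities (where cancellation amplifies rounding error to $\calO(\epsilon)$) to being applied before the squaring (so cancellation happens on inputs whose rounding error is already $\calO(\epsilon^2)$).
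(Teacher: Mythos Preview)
Your proposal is correct and follows essentially the same route as the paper: both invoke the standard floating-point model $fl(x\circ y)=(x\circ y)(1+\delta)$, track the $\calO(\epsilon)$ absolute errors on the $\calO(1)$ monomials $a^2$, $2ab$, $b^2$, and observe that these survive the cancellation, whereas in the direct form the subtraction happens first on already-small quantities. Your write-up is in fact a bit more careful than the paper's---you make the implicit hypothesis $a,b=\calO(1)$ explicit and cleanly separate absolute from relative error---but the argument is the same.
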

\begin{proof}
  {Assume $fl(x) = x(1+C \epsilon)$, where $C$ is an $\epsilon$-independent constant that does depend on $x$. We have $|C| \leq 1$, and in the sequel we use $C_j$ for $j = 1, 2 \ldots, $ to denote various such constants. Since $a-b = \calO(\epsilon)$, we have
\begin{align*}
fl(a^2 - 2 a b + b^2) &=  fl(a^2) - fl(2ab) + fl(b^2) + C_1 \epsilon \\
                                &= a^2(1+C_2 \epsilon) - 2ab(1+ C_3 \epsilon) + b^2(1 + C_4 \epsilon)  + C_1 \epsilon\\
                                &= (a - b)^2 + (C_1 + a^2C_2 - 2abC_3 + b^2C_4)\epsilon \\
                                & = (\calO(\epsilon))^2 + \calO(\epsilon) \\
                                & =  \calO(\epsilon).
\end{align*}
showing the first equation in the Lemma's conclusion. The second equation is straightforward since $fl((a-b)^2) = fl((\calO(\epsilon))^2) = (\calO(\epsilon))^2(1+C \epsilon) = (\calO(\epsilon))^2 = \calO(\epsilon^2)$. }
\end{proof}
The technical conclusion of this lemma can be communicated visually via Figure \ref{fig:a_b}. In this Figure, we introduce a parameter $\mu \in (0,1)$ to emulate the RBM setting, and set $b = a + \mu 4^{-N}$. We numerically compute the $\mu$-maximum of the expressions $\sqrt{(a-b)^2} = \mu 4^{-N}$ and $\sqrt{a^2 - 2 a b + b^2}$. 
\begin{figure}
\begin{center}
  \includegraphics[width=0.6\textwidth]{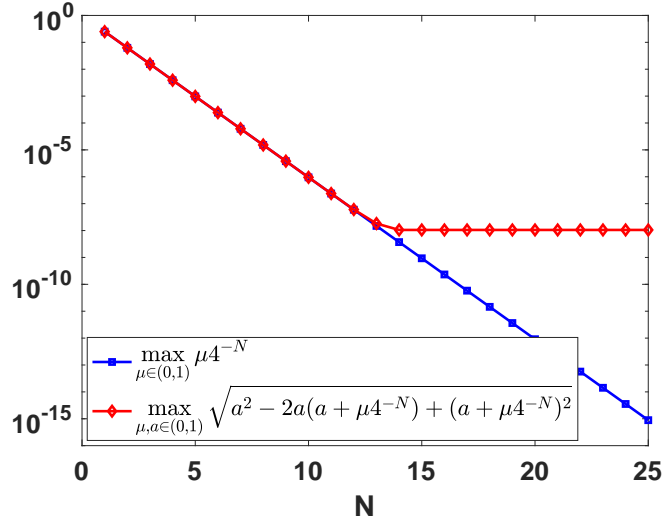}
\end{center}
  \caption{The loss of accuracy for the formula $(a - b)^2 = a^2 - 2 a b + b^2$, with $a$ randomly chosen from $(0, 1)$ and $b = a + \mu 4^{-N}$. Implemented in double precision, we have $\sqrt{\epsilon} \approx 10^{-8}$. We see that the $a^2 - 2 a b + b^2$ formula stagnates around this level.}
\label{fig:a_b}
\end{figure}
The figure results demonstrate that the expression $\sqrt{a^2 - 2 a b + b^2}$, representing the direct RBM estimate \eqref{eq:error_es_quan}, {stagnates} around $\sqrt{\epsilon}$, which is about $10^{-8}$ in IEEE double-precision. Direct usage of $\sqrt{(a-b)^2}$ does not suffer from this stagnation. This motivates the need for a robust approach to handle this case. One approach is described in \cite{Ohlberger2014}, and we describe a new method in the next two sections that also circumvents this issue.
}

\subsection{A new evaluation of the residual norm}
\label{sec:ee2}
\annote{
An intuitive explanation of our approach is as follows: Fix $u_N^\calN$ and consider the right-hand side expression in \eqref{eq:error-rN}. {We define $f\left(\cdot; \bmu\right) = f_{\parallel}\left(\cdot;\bmu\right) + f_{\perp}(\cdot;\bmu)$, where  $f_{\parallel}\left(\cdot;\bmu\right)$ is} a component of $f(\cdot;\bmu)$ that is parallel to $a(u_N^\calN,\cdot;\bmu)$ {and $f_{\perp}(\cdot;\bmu)$ is} a component that is perpendicular to $a(u_N^\calN,\cdot;\bmu)$. {Therefore, equation (\ref{eq:error-rN}) can be} rewrite as
\begin{align*}
  \left\|r_N(\cdot;\bmu)\right\|_{(X^\calN)'}^2 = \left\| \left[ a(u_N^\calN, \cdot; \bmu) - f_{\parallel}\left(\cdot;\bmu\right) \right] + f_{\perp}(\cdot;\bmu) \right\|_{(X^\calN)'}^2{.}
\end{align*}
Our improvement to the standard evaluation \eqref{eq:error_es_quan} computes the residual norm above by the following observation: Since the two separated terms under the norm are perpendicular, we can separate them via the Pythagorean theorem. We now present the details; writing the residual norm as a norm in $X^\calN$ instead of in the dual $(X^\calN)'$, we have
}
\begin{equation}
\label{eq:residualnorm}
\lVert r_N(\cdot; \bm{\mu})\rVert_{(X^{\mathcal{N}})'} = \left\lVert \sum_{q = 1}^{Q_f} \Theta_f^q(\bmu) \calC^q - \sum_{q = 1}^{Q_a} \sum_{m = 1}^N \Theta_a^q(\bmu) u^\calN_{Nm} \calL_m^q \right \rVert_{X^\calN}.
\end{equation}
Assume that $\calN > Q_a N$, which is a reasonable assumption in the RBM framework. We introduce the following subspace in the Hilbert space $X^\calN$:
\begin{align*}
  V_N &\coloneqq \mathrm{span} \left\{ \calL_1^1,\; \ldots,\; \calL_1^{Q_a},\; \ldots,\; \calL_N^1,\; \ldots,\; \calL_N^{Q_a}\right\}, \\
\end{align*}
Defining $V_N^\perp$ as the $X^\calN$-orthogonal complement of $V_N$, then let $P_N$ be the $X^\calN$-orthogonal projection onto $V_N$, and let $P_N^\perp$ be the orthogonal projection onto $V_N^\perp$.

We are now ready to state a preliminary result:
\begin{lemma}\label{lemma:residual}
The dual norm of the residual \eqref{eq:residualnorm} has the form
\begin{equation}
\label{eq:newresidualnorm}
\lVert r_N(\cdot; \bm{\mu})\rVert_{(X^{\mathcal{N}})'} = \sqrt{\left\lVert \sum_{q=1}^{Q_f} \Theta_f^q(\bmu) P_N^\perp \calC^q \right \rVert^2_{X^\calN} + \left\lVert \sum_{q=1}^{Q_f} \Theta_f^q(\bmu) P_N \calC^q - \sum_{q=1}^{Q_a} \sum_{m=1}^N \theta_a^q\left(\bmu\right) u_{N m}^\calN \calL_m^q \right \rVert^2_{X^\calN}}.
\end{equation}
\end{lemma}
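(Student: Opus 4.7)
The plan is straightforward: decompose the single term inside the norm on the right-hand side of \eqref{eq:residualnorm} into two pieces, one lying in $V_N$ and one lying in $V_N^\perp$, then apply the Pythagorean theorem in $X^\calN$.

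First I would split each Riesz representer $\calC^{\tilde q}$ via the orthogonal decomposition
\[
\calC^{\tilde q} = P_N \calC^{\tilde q} + P_N^\perp \calC^{\tilde q},
\]
so that the right-hand side of \eqref{eq:residualnorm} becomes
\[
\underbrace{\sum_{\tilde q=1}^{Q_f} \Theta_f^{\tilde q}(\bmu)\, P_N^\perp \calC^{\tilde q}}_{\in V_N^\perp} \;+\; \underbrace{\left[ \sum_{\tilde q=1}^{Q_f} \Theta_f^{\tilde q}(\bmu)\, P_N \calC^{\tilde q} \;-\; \sum_{q=1}^{Q_a}\sum_{m=1}^N \Theta_a^q(\bmu)\, u_{Nm}^\calN \calL_m^q \right]}_{\in V_N}.
\]
The key observation for the second bracket is that by the definition of $V_N$ as the span of the $\calL_m^q$, every term of the form $\calL_m^q$ lies in $V_N$, and $P_N \calC^{\tilde q} \in V_N$ by construction; hence the full bracket belongs to $V_N$. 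The first sum lies in $V_N^\perp$ by definition of $P_N^\perp$.

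Since $V_N$ and $V_N^\perp$ are $X^\calN$-orthogonal complements, the Pythagorean identity in the Hilbert space $X^\calN$ immediately yields
\[
\left\| v_\perp + v_\parallel \right\|_{X^\calN}^2 = \left\| v_\perp \right\|_{X^\calN}^2 + \left\| v_\parallel \right\|_{X^\calN}^2
\]
for any $v_\perp \in V_N^\perp$ and $v_\parallel \in V_N$. Applying this to the decomposition above and taking the square root gives exactly \eqref{eq:newresidualnorm}.

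There is no real obstacle here; the only thing one should be careful about is to keep the identification between the dual norm $\|r_N(\cdot;\bmu)\|_{(X^\calN)'}$ and the $X^\calN$-norm of its Riesz representer straight (this is what allows one to write \eqref{eq:residualnorm} in the first place, using the definitions \eqref{error_es_problem} of $\calC^{\tilde q}$ and $\calL_m^q$ together with the affine decompositions \eqref{eq:assum_a} and \eqref{eq:rbsolution}). Once that identification is invoked, the lemma reduces to the elementary Pythagorean split just described.
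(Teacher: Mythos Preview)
Your proposal is correct and follows essentially the same approach as the paper: decompose the Riesz representer of $r_N$ into its $V_N$ and $V_N^\perp$ components and invoke the Pythagorean theorem. If anything, you are slightly more explicit than the paper in justifying why each summand lands in the claimed subspace.
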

\begin{proof}
  The result is a fairly straightforward computation in least-squares problems. We have 
  \begin{align*}
    r_N\left(\cdot,\bmu\right) &= \sum_{q = 1}^{Q_f} \Theta_f^q(\bmu) \calC^q - \sum_{q = 1}^{Q_a} \sum_{m = 1}^N \Theta_a^q(\bmu) u^\calN_{Nm} \calL_m^q \\
                               &= \sum_{q = 1}^{Q_f} \Theta_f^q(\bmu) P_N^\perp \calC^q + \left( \sum_{q = 1}^{Q_f} \Theta_f^q(\bmu) P_N \calC^q -  \sum_{q = 1}^{Q_a} \sum_{m = 1}^N \Theta_a^q(\bmu) u^\calN_{Nm} \calL_m^q\right).
  \end{align*}
  Note that the first term is an element of $P_N^\perp$ and the second term in parenthesis is an element of $P_N$. The conclusion follows from the Pythagorean {theorem}.
\end{proof}
This Lemma yields a computational procedure that ameliorates finite-precision loss of significance in numerical implementations.

\subsubsection{Implementation and offline-online decomposition}

In this section we treat elements of $X^\calN$ as Euclidean vectors in $\R^{\calN}$ and identify the norm $\|\cdot\|_{X^\calN}$ with the standard $\ell^2$ norm $\|\cdot\|$. When the norm $\|\cdot\|_{X^\calN}$ is different than the $\ell^2$ norm, the below discussion would proceed by inserting Gramian square root matrices in appropriate places so that the resulting weighted $\ell^2$ norm equals the norm on $X^\calN$.

The discretized version of $\calL_m^q \in X^\calN$, is $\vec{\calL}_m^q$, a $\calN \times 1$ vector. Similarly, we let $\vec{\calC^q} \in \R^{\calN}$ denote the vector representation of $\calC^q$. We use the $\vec{\calL}_m^q$ vectors to define a matrix $\mathcal{B}$ and its associated column-pivoted reduced QR factorization:
\begin{align*}
  \mathcal{B} &= \left( \vec{\calL}_1^1,\; \ldots,\; \vec{\calL}_1^{Q_a},\; \ldots,\; \vec{\calL}_N^1,\; \ldots,\;\vec{\calL}_N^{Q_a}\right) \in \R^{\calN \times Q_a N}{,} \\
  \mathcal{B} \mathcal{Z} &= \calQ \calR, \hskip 25pt \calQ \in \R^{\calN \times \mathrm{rank}(\mathcal{B})},\hskip 15pt  \calR \in \R^{\mathrm{rank}(\mathcal{B}) \times Q_a N},
\end{align*}
where $\mathcal{Z} \in \R^{Q_a N \times Q_a N}$ is a permutation matrix {obtained from the column-pivoted reduced QR factorization above}. Define {$\vec{c}\left(\bmu\right)$} the column vector
\begin{align*}
  {\vec{c}\left(\bmu\right)} &= \left( \Theta_a^1(\bmu) u^\calN_{N1},\; \ldots,\; \Theta_a^{Q_a}(\bmu) u^\calN_{N1},\; \ldots,\; \Theta_a^1(\bmu) u^\calN_{NN},\; \ldots \Theta_a^{Q_a}(\bmu) u^\calN_{NN} \right)^T \in \R^{Q_a N}.
\end{align*}
Finally, define $V_N \subset X^\calN$ as the column space of $\calQ$, along with some associated projection matrices:
\begin{align*}
  V_N &\coloneqq \mathrm{range}\left(\calQ\right), & \mathcal{P}_N &= \calQ \calQ^T, & \mathcal{P}_N^\perp &= I - \mathcal{P}_N \eqqcolon \mathcal{W} \mathcal{W}^T.
\end{align*}
\annote{We remark that we allow the column space of $\calQ$ to be less than the number of columns of $\mathcal{B}$ through, e.g. a rank-revealing QR factorization. In cases where this rank deficiency is utilized, this makes our algorithm more online-efficient than that of \cite{Ohlberger2014}, where a Gram-Schmidt step with re-iteration is used on the full dimension of the column space.} 
With $V_N^\perp$ as the $\R^{\calN}$-orthogonal complement of $V_N$, then $\mathcal{P}_N$ orthogonally projects onto $V_N$, and $\mathcal{P}_N^\perp$ orthogonally projects onto $V_N^\perp$. The columns of $\mathcal{W} \in \R^{\calN \times \left(Q_a N - \mathrm{rank}\left(\mathcal{B}\right)\right)}$ are formed from any orthonormal basis for $V_N^\perp$. We can now state conclusion of Lemma \ref{lemma:residual} in terms of vectors and matrices:
\begin{theorem}
The dual norm of the residual \eqref{eq:residualnorm} can be evaluated by
\begin{equation}
\label{eq:newresidualcomputation}
\lVert r_N(\cdot; \bm{\mu})\rVert^2_{(X^{\mathcal{N}})'} = \left\lVert \sum_{q=1}^{Q_f} \Theta_f^q(\bmu) \mathcal{W}^T \vec{\calC}^q \right \rVert^2 + 
\left\lVert \sum_{q=1}^{Q_f} \Theta_f^q(\bmu) \calQ^T \vec{\calC}^q - \calR \mathcal{Z}^T {\vec{c}\left(\bmu\right)} \right \rVert^2.
\end{equation}
\end{theorem}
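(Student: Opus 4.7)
The plan is to reduce the theorem to the statement of Lemma \ref{lemma:residual} by translating the abstract projection statement into the language of the vector/matrix objects defined just above the theorem. First I would write the residual itself as a vector in $\R^{\calN}$:
\begin{equation*}
r_N(\cdot;\bmu) \;\longleftrightarrow\; \sum_{q=1}^{Q_f}\Theta_f^q(\bmu)\vec{\calC}^q \;-\; \mathcal{B}\,\vec{c}(\bmu),
\end{equation*}
where the identification $\mathcal{B}\vec{c}(\bmu)=\sum_{m=1}^N\sum_{q=1}^{Q_a}\Theta_a^q(\bmu)u^\calN_{Nm}\vec{\calL}_m^q$ follows because the column-ordering of $\mathcal{B}$ and the entry-ordering of $\vec{c}(\bmu)$ have been made to agree (outer index $m$, inner index $q$). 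Since $\mathrm{range}(\mathcal{B})\subseteq V_N$ by construction of $V_N=\mathrm{range}(\calQ)$, the term $\mathcal{B}\vec{c}(\bmu)$ lies entirely in $V_N$.

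Next I would invoke Lemma \ref{lemma:residual} (i.e.\ the Pythagorean split of the residual into its $V_N^\perp$ and $V_N$ components) to write
\begin{equation*}
\lVert r_N(\cdot;\bmu)\rVert_{(X^\calN)'}^2
= \Bigl\lVert \mathcal{P}_N^\perp \sum_{q=1}^{Q_f}\Theta_f^q(\bmu)\vec{\calC}^q\Bigr\rVert^2
+ \Bigl\lVert \mathcal{P}_N \sum_{q=1}^{Q_f}\Theta_f^q(\bmu)\vec{\calC}^q - \mathcal{B}\vec{c}(\bmu)\Bigr\rVert^2.
\end{equation*}
The remaining work is purely linear-algebraic: simplify each square-norm using the factorizations $\mathcal{P}_N^\perp=\mathcal{W}\mathcal{W}^T$ with $\mathcal{W}^T\mathcal{W}=I$ (so $\lVert\mathcal{W}\mathcal{W}^T v\rVert=\lVert\mathcal{W}^T v\rVert$), and $\mathcal{P}_N=\calQ\calQ^T$ with $\calQ^T\calQ=I$ (so $\lVert\calQ w\rVert=\lVert w\rVert$ for any $w\in\R^{\mathrm{rank}(\mathcal{B})}$). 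For the second norm I would additionally use $\mathcal{B}=\calQ\calR\mathcal{Z}^T$ (valid because $\mathcal{Z}$ is a permutation so $\mathcal{Z}^{-1}=\mathcal{Z}^T$) to pull a common $\calQ$ factor out:
\begin{equation*}
\calQ\calQ^T v - \mathcal{B}\vec{c}(\bmu) = \calQ\bigl(\calQ^T v - \calR\mathcal{Z}^T\vec{c}(\bmu)\bigr),
\end{equation*}
and then orthonormality of the columns of $\calQ$ collapses $\lVert\calQ(\cdot)\rVert=\lVert\cdot\rVert$. Substituting back yields exactly \eqref{eq:newresidualcomputation}.

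There is no real obstacle here beyond bookkeeping: the only subtle point is making sure the ordering convention for the columns of $\mathcal{B}$ agrees with the ordering convention for the entries of $\vec{c}(\bmu)$, and that $\mathcal{W}^T\mathcal{W}=I$ and $\calQ^T\calQ=I$ (orthonormal columns, but not $\mathcal{W}\mathcal{W}^T=I$ or $\calQ\calQ^T=I$ in general, since these are rectangular). The argument is a direct discretization of Lemma \ref{lemma:residual}; in particular, it does not require $\mathcal{B}$ to be of full column rank, which is the point that makes the algorithm robust to rank-deficiency of the residual matrix as mentioned in the text.
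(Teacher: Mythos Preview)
Your proposal is correct and follows essentially the same approach as the paper: invoke Lemma \ref{lemma:residual} in its vectorized form, then use the factorizations $\mathcal{P}_N=\calQ\calQ^T$, $\mathcal{P}_N^\perp=\mathcal{W}\mathcal{W}^T$, and $\mathcal{B}=\calQ\calR\mathcal{Z}^T$ together with the column-orthonormality identities $\calQ^T\calQ=I$, $\mathcal{W}^T\mathcal{W}=I$ to collapse each norm. The paper's argument is organized term-by-term (second term first, then first term) but the substance is identical to yours.
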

\begin{proof}
The second term in this result equals the second term in \eqref{eq:newresidualnorm} due to the fact that $\calQ^T \calQ = I$ and that $\calQ^T$ is an isometric map on the range of $\calQ$:
\begin{align*}
  \left\lVert \sum_{q=1}^{Q_f} \Theta_f^q(\bmu) {\mathcal{P}_N} \calC^q - \sum_{q=1}^{Q_a} \sum_{m=1}^N \theta_a^q\left(\bmu\right) u_{N m}^\calN \calL_m^q \right \rVert^2_{X^\calN} &= \left\lVert \sum_{q=1}^{Q_f} \Theta_f^q(\bmu) \mathcal{P}_N \vec{\calC}^q - \mathcal{B} {\vec{c}\left(\bmu\right)} \right \rVert^2 \\
                                                                                                                                                                          &= \left\lVert \sum_{q=1}^{Q_f} \Theta_f^q(\bmu) \mathcal{Q} \mathcal{Q}^T \vec{\calC}^q - \mathcal{Q} \mathcal{R} \mathcal{Z}^T {\vec{c}\left(\bmu\right)} \right \rVert^2 \\
                                                                                                                                                                          &= \left\lVert \sum_{q=1}^{Q_f} \Theta_f^q(\bmu) \mathcal{Q}^T \vec{\calC}^q - \mathcal{R} \mathcal{Z}^T {\vec{c}\left(\bmu\right)} \right \rVert^2
\end{align*}
That the first term in the result equals the first term in \eqref{eq:newresidualnorm} is the result of a similar computation but using $\mathcal{W}$ instead of $\mathcal{Q}$:
\begin{align*}
  \left\lVert \sum_{q=1}^{Q_f} \Theta_f^q(\bmu) \mathcal{P}_N^\perp \vec{\calC}^q \right \rVert^2 &= \left\lVert \mathcal{W} \sum_{q=1}^{Q_f} \Theta_f^q(\bmu) \mathcal{W}^T \vec{\calC}^q \right \rVert^2 \\
                                                                                                  &= \left\lVert \sum_{q=1}^{Q_f} \Theta_f^q(\bmu) \mathcal{W}^T \vec{\calC}^q \right \rVert^2{.}
\end{align*}
\end{proof}
The theorem above immediately reveals an offline-online decomposition: The $\bmu$-independent parts of the formula involve the matrix products 
\begin{align*}
  \mathcal{W}^T \vec{\calC}^q, \hskip 25pt \mathcal{Q}^T \vec{\calC}^q, \hskip 25pt \mathcal{R}\mathcal{Z}^T,
\end{align*}
none of which have any dimensions dependent on $\calN$, but are simply dependent on $N$ and $Q_a$. Thus, these matrices may be precomputed and stored independent of $\bmu$. One $\bmu$-dependent component involves the affine coefficients $\Theta_f^q(\bmu)$, which are scalar and thus easy to compute. Finally, the coefficients ${\vec{c}\left(\bmu\right)}$ can be computed explicitly via the affine coefficients $\Theta_a^q(\bmu)$ and from the RBM approximation $u^\calN_N$ in \eqref{eq:rbsolution}. Since computing RBM approximation is $\calN$-independent, the entire residual norm computation via \eqref{eq:newresidualcomputation} is $\calN$-independent in the online phase. 
We denote $\Delta_N(\bmu)$ in \eqref{eq:error_estimator_en} with residual norm computed via this new approach by ${\mathcal E}_2(\bmu; n)$.

\subsection{A residual-free error indicator}
\label{sec:ee3}
Much of the RBM algorithm is dependent on rigor of the inequality in \eqref{eq:error_estimator_en}. However, in practical situations one may not have access to such a computable \textit{a posteriori} error estimator. This happens, for instance, with sufficiently complicated nonlinear pPDE's for which mathematical analysis is difficult or infeasible. In other situations, a rigorous error estimate like \eqref{eq:error_estimator_en} may exist, but is not {easily} computable in a $\calN$-independent fashion due to nonlinearity of the pPDE. In this case, the offline parameter selection portion of the RBM algorithm may be so expensive as to outweigh any computational saving gained during the online phase.

In either of the cases above, one still hopes to use an efficient model-order reduction strategy like RBM, but with an understanding that mathematically rigorous error certification may be lost.

A strategy for devising an error estimate for such a case is the subject of this section. While presumably one always has access to the pPDE residual, one could then use $\|r_N(\cdot; \bmu)\|_{\calN}$ as an error estimator that is not mathematically rigorous. However, this estimator may also not be $\calN$-independent, so that such an estimator is both non-rigorous and expensive. 

The alternative we suggest is as follows: recall expression \eqref{eq:rbsolution} that expresses the RBM approximation $u_N^\calN(\bmu)$ in terms of the snapshots {$u^\calN(\bmu^n)$}, $n=1, \ldots, N$, via the expansion coefficients $\left\{ u_{Nm}^\calN(\bmu) \right\}_{m=1}^N$. As indicated, these expansion coefficients are functions of $\bmu$, and {play role of basis functions. Furthermore,} they satisfy
\begin{align}\label{eq:kronecker}
  {u^\calN_{N m}\left(\bmu^n\right)} = \delta_{n,m},
\end{align}
where $\delta_{n,m}$ is the Kronecker delta. The above property is a direct consequence of condition \eqref{eq:reduced_system} that defines the RBM solution. Therefore, the coefficient functions {$u^\calN_{N m}\left(\cdot\right)$} are actually cardinal Lagrange interpolants associated to the space of functions defined by their span. Note that, given any $\bmu$, the cost of evaluation of these cardinal functions does \textit{not} depend on $\calN$, since these coefficients are computed from the RBM solution.

We now rephrase the essential portion of the offline RBM phase: given the the current parameter values $\bmu^1, \ldots, \bmu^N$ along with the current subspace of parameter-dependent functions 
\begin{align*}
  \mathrm{span} \left\{ { u^\calN_{N1}(\cdot), \ldots, u^\calN_{N N}(\cdot)} \right\},
\end{align*}
can we compute the next parameter value $\bmu^{N+1}$? Abstractly, this can be interpreted as an interpolation problem to find a nested sequence of interpolation points $\bmu^j, \bmu^{j+1}, \ldots$, associated to a nested sequence of function spaces. 

We consider one potential solution to this problem, inspired by concepts in polynomial approximation. We take the next point $\bmu^{N+1}$ as the point that maximizes a function of these cardinal interpolants:
\begin{align}\label{eq:ee3}
  \widetilde{\Delta}_N(\bmu) &= \left(\sum_{m=1}^N \left| {u^\calN_{N m}(\bmu)} \right|\right){.}
\end{align}
The above function is simply the Lebesgue function from interpolation theory {(i.e., the norm of an interpolation operator).}. Note that evaluation of this function depends only on the RBM solution, and does not directly involve computation of residual norms, nor does it require mathematically rigorous \textit{a posteriori} error estimates. We show in Lemma \ref{lemma:lebesgue} below that $\widetilde{\Delta}_N$ does indeed match the behavior of $e_N(\bmu)$; it is therefore quite useful in selecting RBM parameter values to compute snapshots. However, the relationship between $e_N$ and $\widetilde{\Delta}_N$ involves a multiplicative scaling constant that is in general an uncomputable best approximation error. Since we cannot compute this scaling constant, we cannot certify the error committed by parameter values picked with this method. However, we show in our numerical results section that choosing parameter samples via greedy maximization of \eqref{eq:ee3} empirically produces results comparable to using the RBM error estimate \eqref{eq:error_estimator_en}. {Our ongoing work seeks to ``certify'' the surrogate error generated via this approach. A simple approach for approximating a lower bound for $\epsilon_N$ can be $\|u^\calN(\bmu^{N+1}) - u_N^{\calN}(\bmu^{N+1})\|_{{X}}$, which is a computable quantity requiring no additional PDE simulations in the RBM context. Naturally, the effectiveness of this and related approaches will need to be carefully studied in the future.}

We emphasize that this procedure is similar to, but distinct from, empirical interpolation procedures \cite{BarraultMaday2004}. In empirical interpolation, one essentially has an $(N+1)$-dimensional space with $N$ points, and uses the discrepancy between the $(N+1)$-dimensional space and the $N$ points to pick the $(N+1)$st point. We cannot do this here since the $(N+1)$-dimensional space depends explicitly on the sought point $\bmu^{N+1}$. Thus, our strategy using the objective \eqref{eq:ee3} circumvents the needs for identification of the higher-dimensional space.

\subsubsection{Characterization of $\widetilde{\Delta}_N$}

We now motivate the choice of $\widetilde{\Delta}_N(\bmu)$ as an error indicator. 
We introduce the space of $X^\calN$-valued functions in $L^\infty(\mathcal{D})$:
\begin{align*}
  L^\infty\left(\mathcal{D}, X^\calN \right) &= \left\{ u: \mathcal{D} \rightarrow X^\calN \;\; \big| \;\; \| u\|_{L^\infty\left(\mathcal{D}, X^\calN\right)} < \infty \right\}, & \| u\|_{L^\infty\left(\mathcal{D}, X^\calN\right)} &\coloneqq \sup_{\bmu \in \mathcal{D}} \| u(\bmu)\|_{X^\calN}{.}
\end{align*}
A {subspace} of particular interest is those functions in $L^\infty\left(\mathcal{D}, X^\calN \right)$ whose $\bmu$-variation is prescribed by the cardinal functions {$u^\calN_{N m}$}:
\begin{align*}
  U_N = \left\{ u = \sum_{{m}=1}^N y_m u^\calN_{N m}(\bmu) \;\; \big| \;\; y_1, \ldots, y_N \in X^\calN \right\} \subset L^\infty\left( \mathcal{D}, X^\calN \right){.}
\end{align*}
Note that the RBM solution is in $U_N$, and the truth solution is in $L^\infty\left(\mathcal{D}, X^\calN\right)$:
\begin{align*}
  u_N^\calN &\in U_N, & u^\calN \in L^\infty\left(\mathcal{D}, X^\calN\right).
\end{align*}
The former is true by inspection of \eqref{eq:rbsolution}, and the latter is true because of the truth discretization versions of the uniform ellipticity and continuity assumptions \eqref{eq:con} -- \eqref{eq:coe_rev}. The following result then applies.
\begin{lemma}\label{lemma:lebesgue}
  Let ${\left\|e_N(\bmu)\right\|_{X^\calN}} = \left\| u_N^\calN(\bmu) - u^\calN(\bmu) \right\|_{X^\calN}$ be the RBM error committed at parameter value $\bmu$. Then
  \begin{align*}
   {\left\|e_N(\bmu)\right\|_{X^\calN}}  \leq \left(1 + \widetilde{\Delta}_N(\bmu)\right) \epsilon_N(u^\calN),
  \end{align*}
  where $\epsilon_N$ is the $\bmu$-independent quantity,
  \begin{align*}
    \epsilon_N(u^\calN) \coloneqq \inf_{v \in U_N} \left\| u^\calN - v \right\|_{L^\infty\left(\mathcal{D}, X^\calN\right)}{.}
  \end{align*}
\end{lemma}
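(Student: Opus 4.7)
The plan is to run the classical Lebesgue constant argument from interpolation theory, reinterpreting the RBM solution as a Lagrange interpolation operator in the parameter variable. Specifically, I would first define an interpolation operator $I_N : L^\infty(\mathcal{D}, X^\calN) \to U_N$ by
\[
  (I_N u)(\bmu) \coloneqq \sum_{m=1}^N u_{Nm}^\calN(\bmu)\, u(\bmu^m).
\]
Two properties would then be established. (i) $I_N u^\calN = u_N^\calN$, which is immediate from the ansatz \eqref{eq:rbsolution}. (ii) $I_N$ fixes $U_N$: if $v = \sum_m y_m u_{Nm}^\calN \in U_N$, the cardinality condition \eqref{eq:kronecker} gives $v(\bmu^n) = y_n$, hence $(I_N v)(\bmu) = \sum_m u_{Nm}^\calN(\bmu)\, v(\bmu^m) = v(\bmu)$.

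Next I would bound the pointwise operator norm of $I_N$ by $\widetilde{\Delta}_N$. By the triangle inequality and definition of the $L^\infty(\mathcal{D}, X^\calN)$ norm,
\[
  \|(I_N u)(\bmu)\|_{X^\calN} \;\leq\; \sum_{m=1}^N |u_{Nm}^\calN(\bmu)|\, \|u(\bmu^m)\|_{X^\calN} \;\leq\; \widetilde{\Delta}_N(\bmu)\, \|u\|_{L^\infty(\mathcal{D}, X^\calN)}.
\]
This is exactly the observation that $\widetilde{\Delta}_N(\bmu)$ plays the role of a pointwise Lebesgue function for $I_N$.

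Finally I would execute the standard add-and-subtract trick. For any $v \in U_N$, using $u_N^\calN = I_N u^\calN$ and $v = I_N v$,
\[
  \|e_N(\bmu)\|_{X^\calN} \;\leq\; \|u^\calN(\bmu) - v(\bmu)\|_{X^\calN} + \|I_N(v - u^\calN)(\bmu)\|_{X^\calN} \;\leq\; (1 + \widetilde{\Delta}_N(\bmu))\, \|u^\calN - v\|_{L^\infty(\mathcal{D}, X^\calN)}.
\]
Taking the infimum over $v \in U_N$ yields the claimed bound with $\epsilon_N(u^\calN)$.

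I do not anticipate a serious obstacle: the only subtlety is the conceptual one, namely recognizing that although $u_N^\calN$ is defined variationally through the Galerkin condition \eqref{eq:reduced_system}, the cardinality property \eqref{eq:kronecker} of the coefficient functions $u_{Nm}^\calN$ lets us simultaneously view $u_N^\calN$ as the image of $u^\calN$ under a Lagrange-type interpolation operator with basis $\{u_{Nm}^\calN\}_{m=1}^N$. Once that identification is made, the rest is routine manipulation of norms.
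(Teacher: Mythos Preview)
Your proposal is correct and follows essentially the same route as the paper: define the Lagrange interpolation operator (the paper calls it $P_N$), verify it sends $u^\calN$ to $u_N^\calN$ and fixes $U_N$, bound its pointwise norm by $\widetilde{\Delta}_N(\bmu)$ via the triangle inequality, and finish with the add-and-subtract Lebesgue argument and infimum over $v\in U_N$. The only cosmetic difference is that the paper introduces a separate point-evaluation operator $\delta_{\bmu}$ and phrases the bound as $\|\delta_{\bmu} P_N\|\le\widetilde{\Delta}_N(\bmu)$, whereas you work directly with $\|(I_N u)(\bmu)\|_{X^\calN}$; the content is identical.
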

\begin{proof}
  The result is an exercise in a pointwise version of Lebesgue's Lemma for projective approximations. We first define two operators. The first, $P_N : L^\infty\left(\mathcal{D}, X^\calN\right) \rightarrow U_N$, is the interpolative projection operator defined by
  \begin{align*}
    P_N v &= \sum_{m=1}^N v\left(\bmu^{{m}}\right) {u^\calN_{N m}(\bmu)}, & v &\in L^\infty\left(\mathcal{D}, X^\calN\right).
  \end{align*}
  The second operator is $\delta_{\bmu} : L^\infty\left(\mathcal{D}, X^\calN\right) \rightarrow X^\calN$, corresponding to point-evaluation at $\bmu \in \mathcal{D}$:
  \begin{align*}
    \delta_{\bmu} v &= v(\bmu), & v &\in L^\infty\left(\mathcal{D}, X^\calN\right).
  \end{align*}
  Note that
  \begin{align*}
    P_N u^\calN{(\bmu)} = \sum_{m=1}^N u^\calN(\bmu^m) {u^\calN_{N m}(\bmu)} = u^\calN_N{(\bmu)}.
  \end{align*}
  Now let $v$ be any element in $U_N$, {so that $P_N v = v$.}
  Then
  \begin{align}
    \nonumber {\left\|e_N(\bmu)\right\|_{X^\calN}}  &= \left\| u_N^\calN(\bmu) - u^\calN(\bmu)\right\|_{X^\calN} \leq \left\| u_N^\calN(\bmu) - v(\bmu) \right\|_{X^\calN} + \left\| v(\bmu) - u^\calN(\bmu)\right\|_{X^\calN} \\
              \nonumber &{=} \left\| {P_N \left[ u^\calN - v \right](\bmu)} \right\|_{X^\calN} + \left\| u^\calN(\bmu) - v(\bmu) \right\|_{X^\calN} \\
              \nonumber &= \left\| \delta_{\bmu} P_N \left[ u^\calN - v \right] \right\|_{X^\calN} + \left\| u^\calN(\bmu) - v(\bmu) \right\|_{X^\calN} \\
    \label{eq:lemma-1}&\leq \left[ 1 + \left\| \delta_{\bmu} P_N \right\| \right] \left\| u^\calN - v \right\|_{L^\infty\left(\mathcal{D}, X^\calN\right)},
  \end{align}
  where $\|\delta_{\bmu} P_N\|$ is the induced operator norm. We can directly compute
  \begin{align*}
    \left\| \delta_{\bmu} P_N \right\| &= \sup_{\|w\|_{L^\infty(\mathcal{D}, X^\calN)} = 1} \left\| \delta_{\bmu} P_N w \right\|_{X^\calN} \\
                                       &= \sup_{\|w\|_{L^\infty(\mathcal{D}, X^\calN)} = 1} \left\| \sum_{m=1}^N {u^\calN_{N m}(\bmu)} w(\bmu^m) \right\|_{X^\calN} \\
                                       &\leq \sum_{m=1}^N \left| {u^\calN_{N m}(\bmu)}\right| \sup_{\|w\|_{L^\infty(\mathcal{D}, X^\calN)} = 1} \left\| w(\bmu^m) \right\|_{X^\calN} \\
                                       &\leq \sum_{m=1}^N \left| {u^\calN_{N m}(\bmu)}\right| = \widetilde{\Delta}_N(\bmu){.}
  \end{align*}
  Using the above in \eqref{eq:lemma-1} and infimizing over $v$ yields
  \begin{align*}
    {\left\|e_N(\bmu)\right\|_{X^\calN}}  \leq \left[ 1 + \widetilde{\Delta}(\bmu) \right] \inf_{v \in U_N} \left\| v - u^\calN\right\|_{L^\infty\left(\mathcal{D}, X^\calN\right)} = \left[ 1 + \widetilde{\Delta}_N(\bmu) \right] \epsilon_N(u^\calN){.}
  \end{align*}
\end{proof}

Lemma \ref{lemma:lebesgue} states that, relative to the best approximant from $U_N$, the RBM solution $u_N^\calN$ commits a $\bmu$-pointwise error that scales monotonically with $\widetilde{\Delta}_N(\bmu)$. This is, essentially, a generalization of Lebesgue's Lemma in approximation theory.

Therefore, at any iteration $n$, the function $\widetilde{\Delta}_n(\bmu)$ gives a qualitative indication of the error at $\bmu$, and so choosing a new snapshot parameter $\bmu^{n+1}$ at the maximum of this function is a greedy function that indirectly seeks to minimize the RBM error. However, it does not provide a certifiable error since we cannot compute the value of $\epsilon_N$. \annote{Furthermore, our Lemma shows only that our residual-free error estimate is an upper bound for the true error; a rigorous procedure would also establish that our error estimate is a lower bound. Without this lower bound guarantee, one can contrive situations where our error estimate chooses parameter values that are not indicative of RBM subspace quality. However, we have not observed this in any examples we have tried. Our analysis cannot currently exclude the possibility of such pathological problems, and our ongoing work seeks to establish a lower bound estimate that would rigorously justify our residual-free objective.}

\section{Numerical results}
\label{sec:results}
In this section, we present numerical examples to demonstrate the accuracy and efficiency of the proposed two new 
approaches. The difference between all these approaches appears only in how parameter values $\bmu^j$ are selected for use in the RBM algorithm.  We have three approaches to compare: (i) The standard RBM strategy that uses $\Delta_N(\bmu)$ for the greedy objective as defined in \eqref{eq:error_estimator_en} with the residual norm computed using \eqref{eq:error_es_quan}. (ii) The RBM strategy that again uses $\Delta_N(\bmu)$ from \eqref{eq:error_estimator_en} as the greedy objective, but now uses the formula \eqref{eq:newresidualnorm} to compute the residual norm. (iii) The greedy objective is {$\widetilde{\Delta}_{N}(\bmu)$}, as defined in \eqref{eq:ee3}.

Since the parameter values selected by each of these three procedures is different, we use the subscript $i = 1,2 , 3$, to differentiate quantities for these methods. 
{We denote these error bounds are $\mathcal{E}_{i}(\bmu)$ for $i = 1, 2, 3$. More specifically,  
\begin{alignat}{2}
\mathcal{E}_{1}(\bmu) \quad = & \quad \Delta_{N}(\bm{\mu})  \text{ defined in (\ref{eq:error_estimator_en})}\\
\vspace{0.1in}
\mathcal{E}_{2}(\bmu) \quad = & \quad \frac{\lVert r_N(\cdot; \bm{\mu})\rVert_{(X^{\mathcal{N}})'}}{\alpha^{\mathcal{N}}_{LB}(\bm{\mu})}, \text{ where } \lVert r_N(\cdot; \bm{\mu})\rVert_{(X^{\mathcal{N}})'} \text{ is computed in } \eqref{eq:newresidualnorm}\\
\vspace{0.1in}
\mathcal{E}_{3}(\bmu) \quad = & \quad \widetilde{\Delta}_{N}(\bmu)
\end{alignat}
}

{Note that the} first two approaches have rigorous error certification values. 
We use $u^{\mathcal N}_{N, {\mathcal E}_i}(\mu)$  to denote the RBM approximation using greedy strategy $i$, for $i = 1,2 ,3$.

We test the three implementations on three problems, and present the results respectively in each subsection below.

\subsection{Two cases with 1-dimensional parameter}

We first test the three RB methods on the following equations with one parameter.
\begin{subequations}
\begin{alignat}{1}
\label{eq:1Ddiff_prob}
&(1+\mu x)u_{xx} + u_{yy} = e^{4xy} \quad {\rm on} \quad \Omega.\\
\label{eq:1Ddiff_prob_disc}
&(1+\ell(\mu) x)u_{xx} + u_{yy} = e^{4xy} \quad {\rm on} \quad \Omega.
\end{alignat}
\end{subequations}
The first equation has continuous dependence on the parameter while the second has discontinuous dependence by having
\[
  \ell(\mu) = \sin \left((\mu - \mbox{sign}(\mu)) \frac{\pi}{2} \right ), \quad \mu \in \mathcal{D}{.}
\]
We take the physical domain as $\Omega = [-1, 1]\times[-1, 1]$ and impose homogeneous Dirichlet boundary conditions on $\partial \Omega$. The truth approximation is a spectral Chebyshev collocation method based on $50$ degrees of freedom for each direction. The parameter domain $\mathcal{D}$ for $\mu$ is taken to be $[-0.995, 0.995]$, and the training set $\Xi_{\rm train}$ a uniform Cartesian grid with 512 equally spaced points. \\

We show in Figure \ref{fig:1d_conv} (left) the history of convergence for the three approaches. The classical approach stagnates before reaching the square root of machine epsilon as expected. However, both new approaches have worst-case error estimate and corresponding exact error converging further toward machine accuracy. It is worth noting that the residual-free function $\widetilde{\Delta}_N(\bmu)$ used in the greedy scheme for method $i=3$ is not a rigorous error bound. However, the RB space built from its maximizers 
has similar approximation properties, as confirmed by the cyan curve in Figure \ref{fig:1d_conv} (left).
The fact that $\widetilde{\Delta}_N$ captures the pattern of the true error as the parameter $\mu$ varies is shown in Figure \ref{fig:1d_conv} (right). Here the black $\widetilde{\Delta}_{10}(\mu)$ curve is multiplied by $10^{-4}$ to achieve better alignment with the error curve. 
Finally, we show in Figure \ref{fig:1d_lags} the $10$ Lagrange shape functions {$u^\calN_{N m}(\cdot)$ as implicitly defined in \eqref{eq:rbsolution} and \eqref{eq:reduced_system}} when a $10$-dimensional RB space is used. 

The Lagrange shape functions used in the residual-free method inherit the structure dictated by the PDE. For example, they are discontinuous if the PDE enforces this. The second example \eqref{eq:1Ddiff_prob_disc} has discontinuous parameter dependence, and we show the associated Lagrange shape functions for this case in Figure \ref{fig:1d_discontinuity}. The results show both that convergence is not directly affected by the discontinuity since the Lagrange functions now inherit discontinuous dependence from the PDE.

\begin{figure}
\begin{center}
  \includegraphics[width=0.49\textwidth]{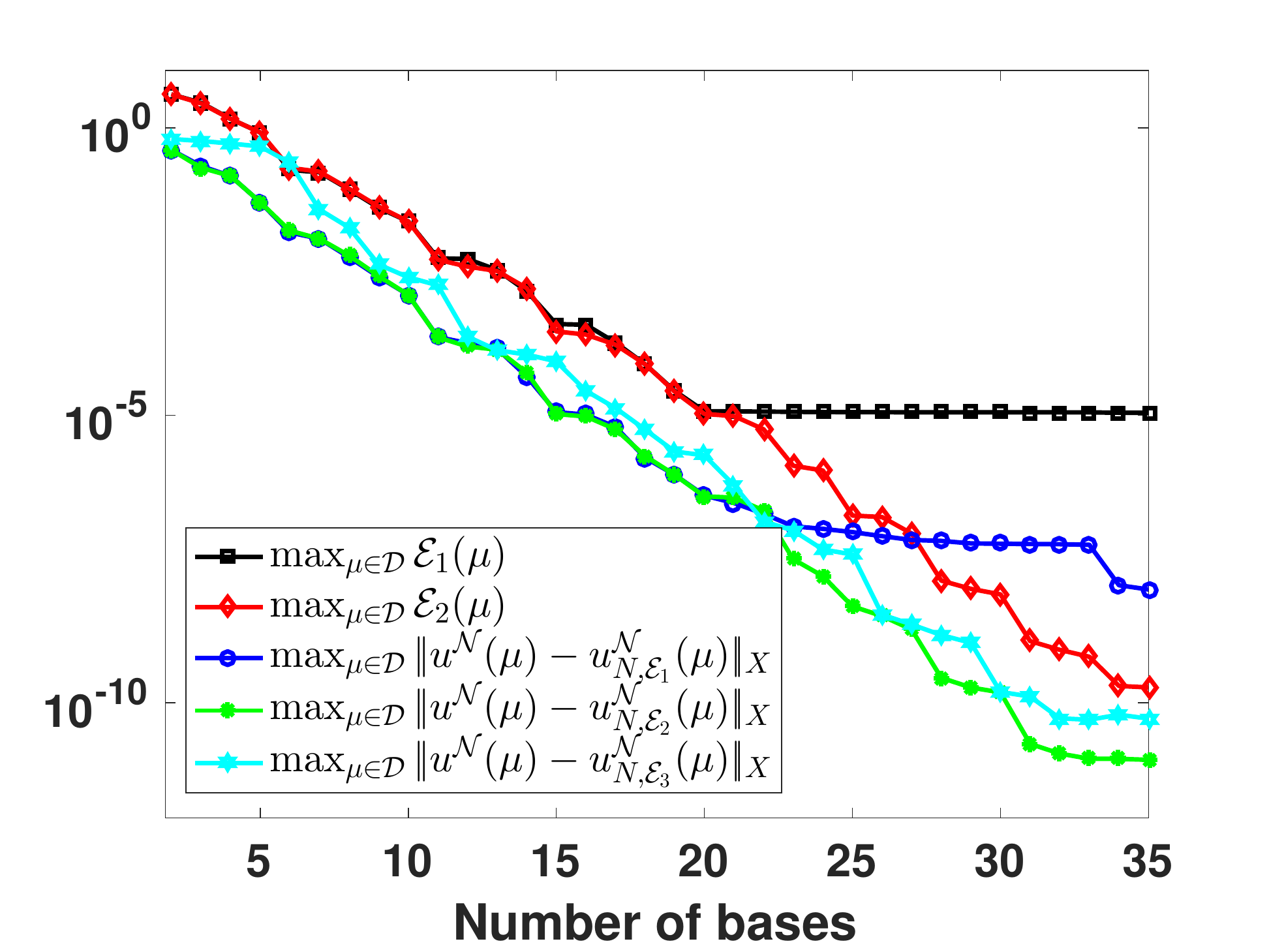}
  \includegraphics[width=0.49\textwidth]{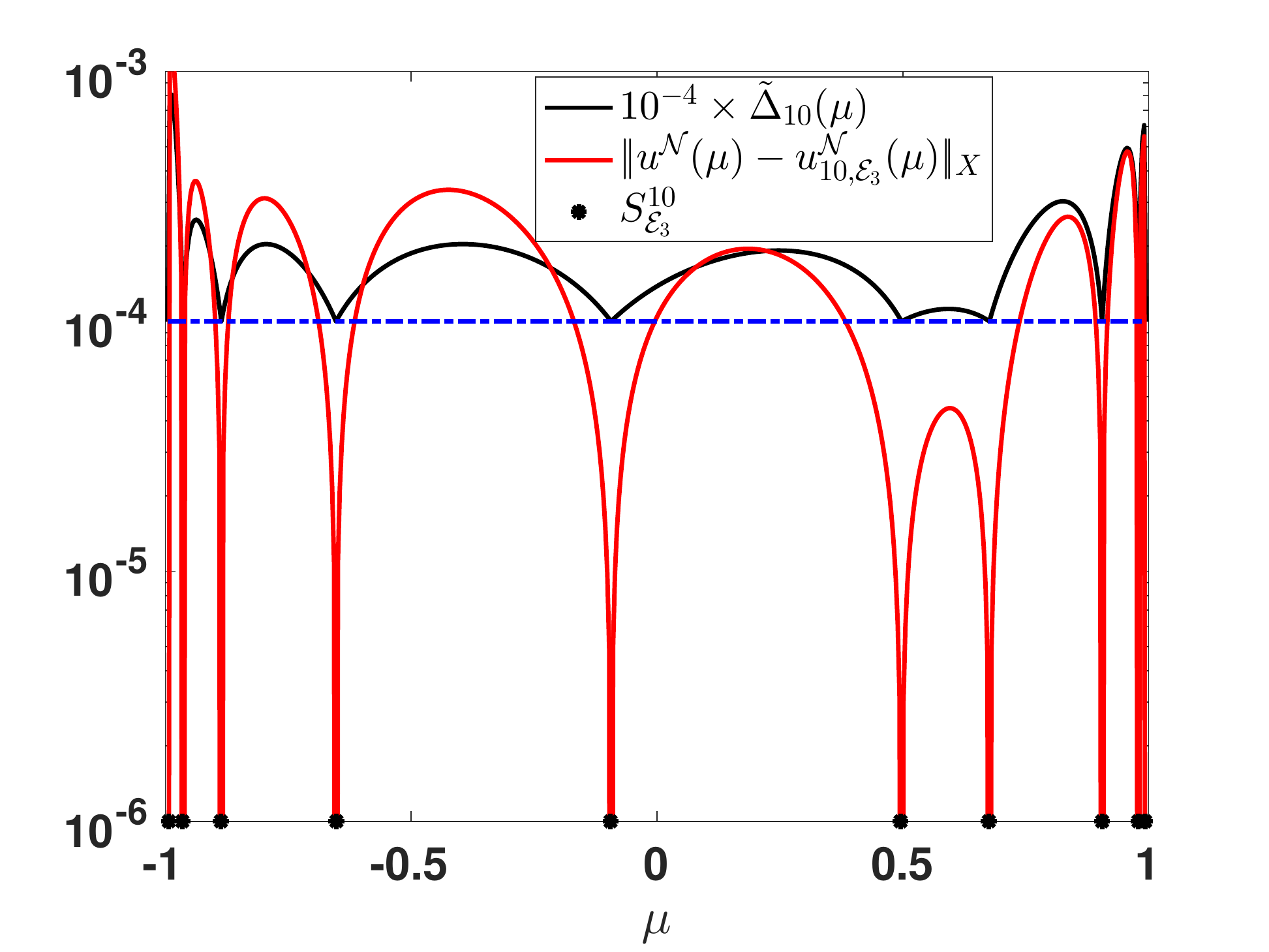}
\end{center}
\caption{Results for one-parameter case \eqref{eq:1Ddiff_prob}: The comparison of the three approaches (Left) and a demonstration that the residual-free error indicator matches with the true error well (Right).}
\label{fig:1d_conv}
\end{figure}

\begin{figure}
\begin{center}
  \includegraphics[width=\textwidth]{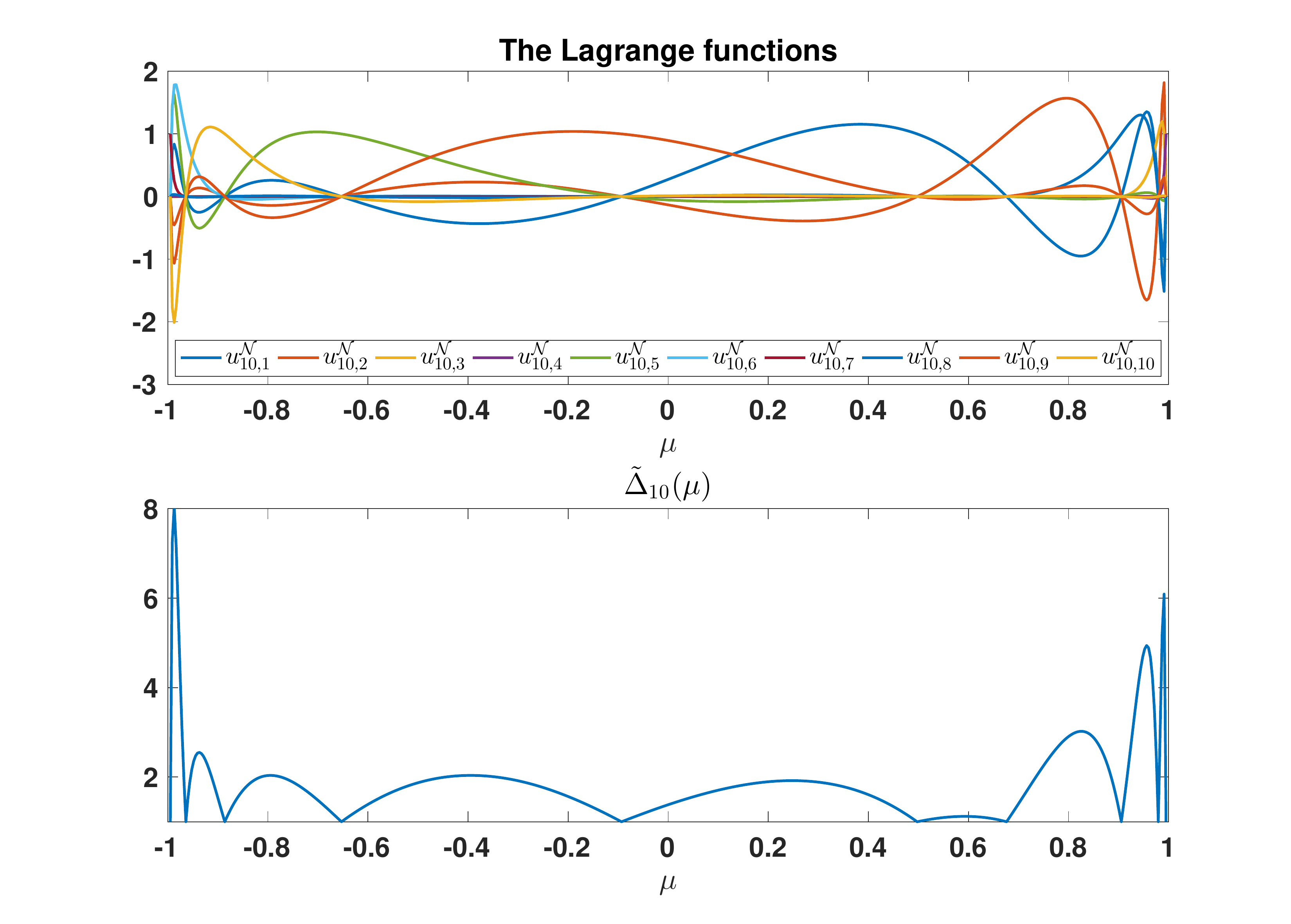}
\end{center}
\caption{The Lagrange shape functions.}
\label{fig:1d_lags}
\end{figure}

\begin{figure}
\begin{center}
  \includegraphics[width=0.49\textwidth]{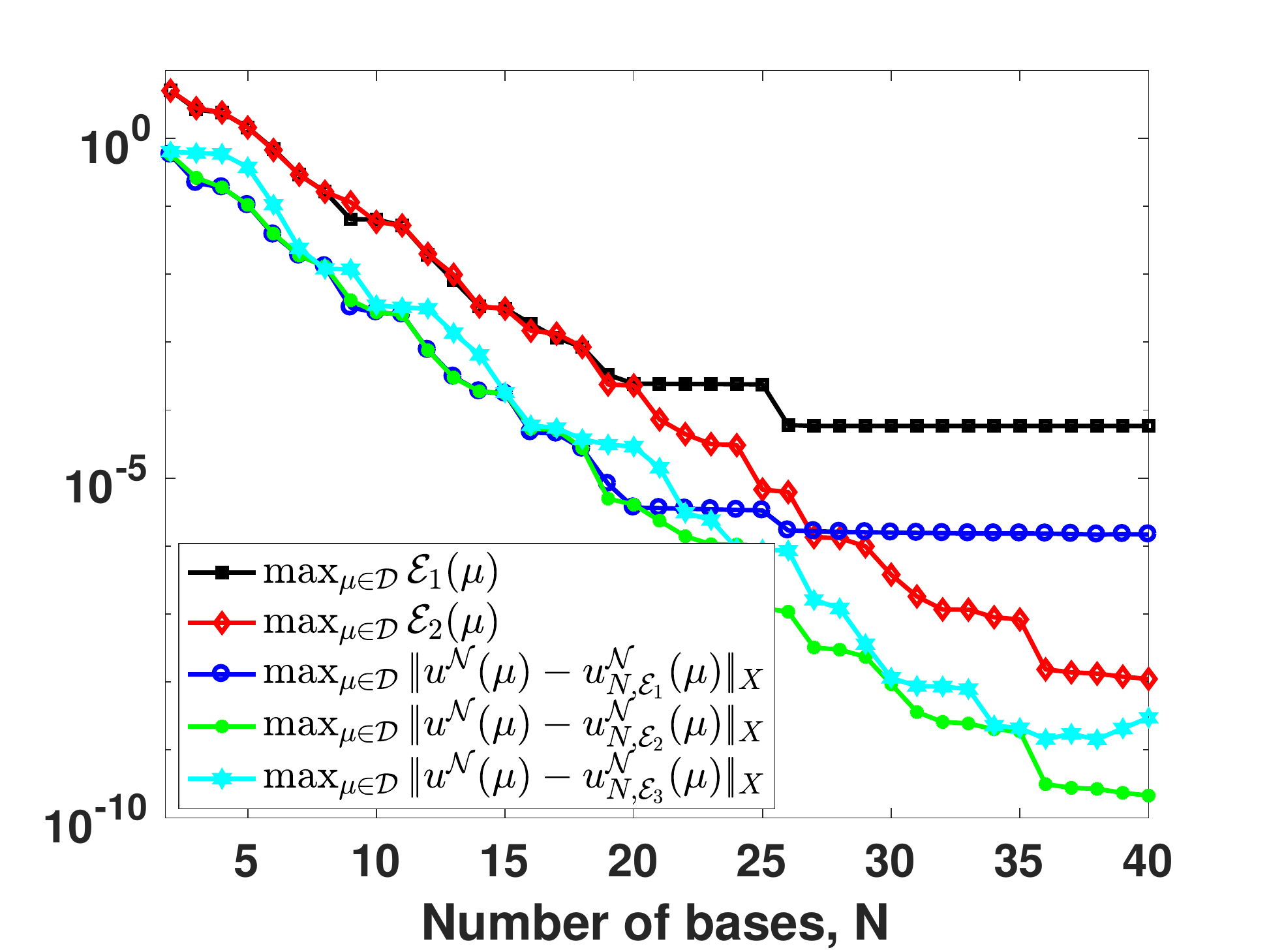}
  \includegraphics[width=0.49\textwidth]{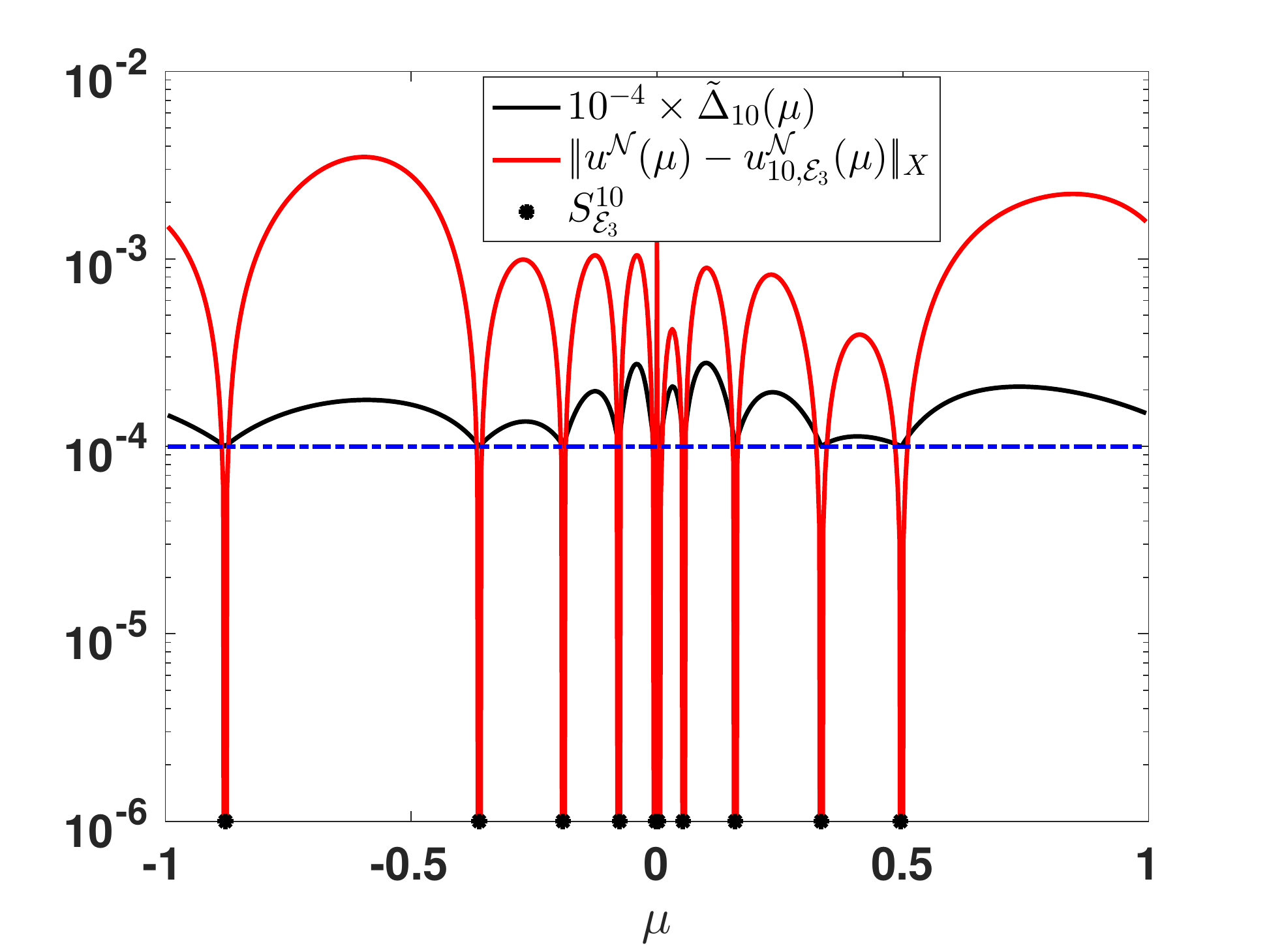}
  \includegraphics[width=\textwidth]{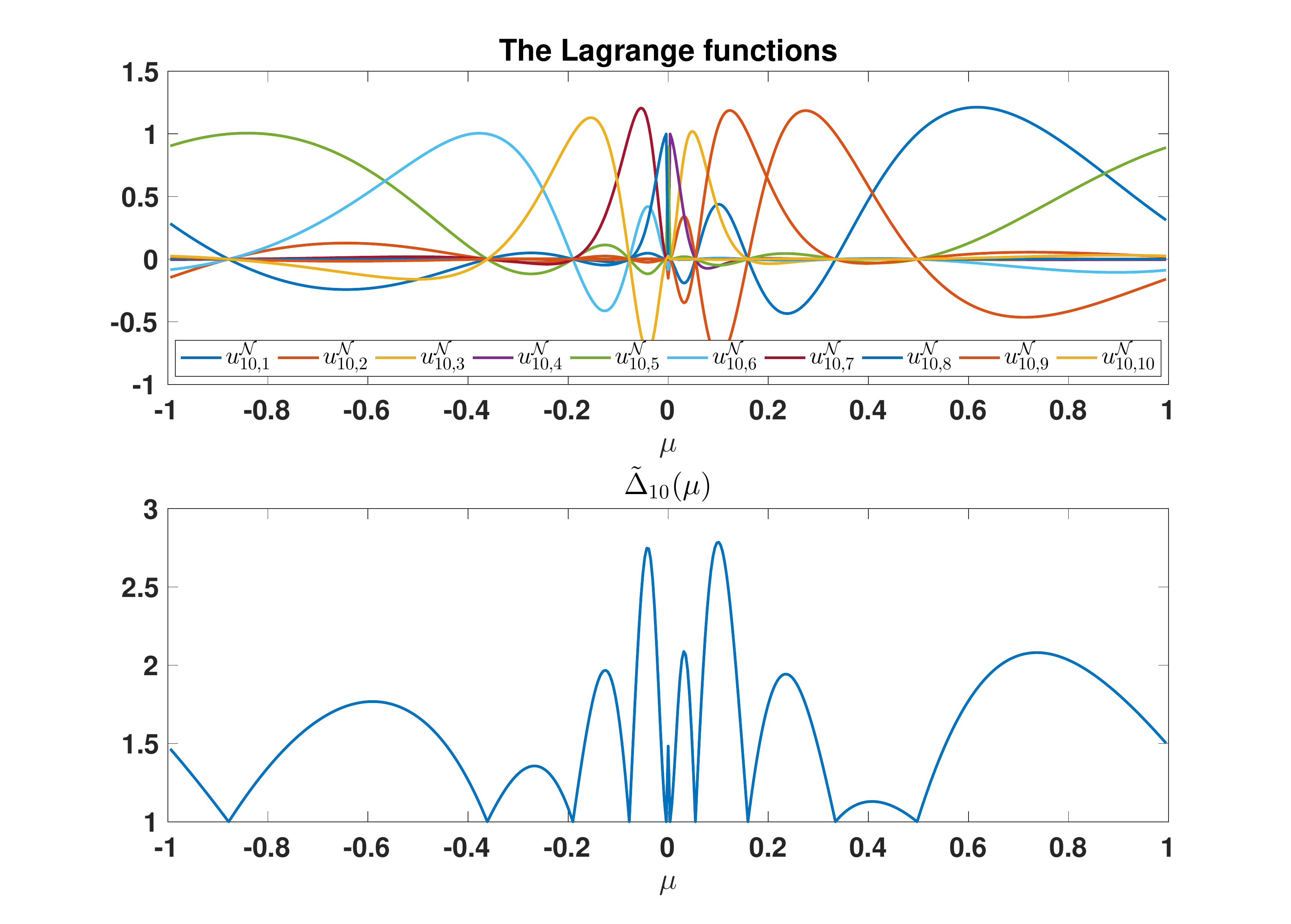}
\end{center}
\caption{Results for the one-dimensional case with discontinuous dependence on parameter \eqref{eq:1Ddiff_prob_disc}. 
  Top left us the comparison of histories of convergence for the three approaches. Top right demonstrates that the residual-free error indicator {roughly} matches with the true error well even when the parametric dependence is discontinuous. {Note that while error minima locations are successfully predicted, the maximum location(s) may be different.} The bottom figure shows discontinuous Lagrange functions that are necessary for the RB solution to approximate the truth solution well.}
\label{fig:1d_discontinuity}
\end{figure}

\subsection{The first 2-dimensional test case}

As a first test case with $2$-dimensional parameter, we consider the following equation whose solution  space ends up being well approximated by a $40$-dimensional RBM surrogate.
\begin{equation}
\label{eq:2Ddiff_prob1}
-u_{xx} - \mu_1 u_{yy} - \mu_2 u =  -10\sin(8 x (y-1)) \quad {\rm on} \quad \Omega.
\end{equation}
The physical domain is $\Omega = [-1 ,1] \times [-1,1]$ and we impose homogeneous Dirichlet boundary conditions on $\partial \Omega$. The truth approximation is a spectral Chebyshev collocation method with $\mathcal{N}_{x} = 50$ degrees of freedom in each direction. This means the truth approximation has dimension $\mathcal{N} = \mathcal{N}^2_{x}$. The parameter domain $\mathcal{D}$ for $(\mu_{1}, \mu_{2})$  is taken to be $[0.1, 4] \times [0, 2]$. For the training set $\Xi_{\rm train}$ we discretize $\calD$ using a tensorial $129 \times 65$ uniform Cartesian grid. \\

\begin{figure}
\begin{center}
  \includegraphics[width=0.49\textwidth]{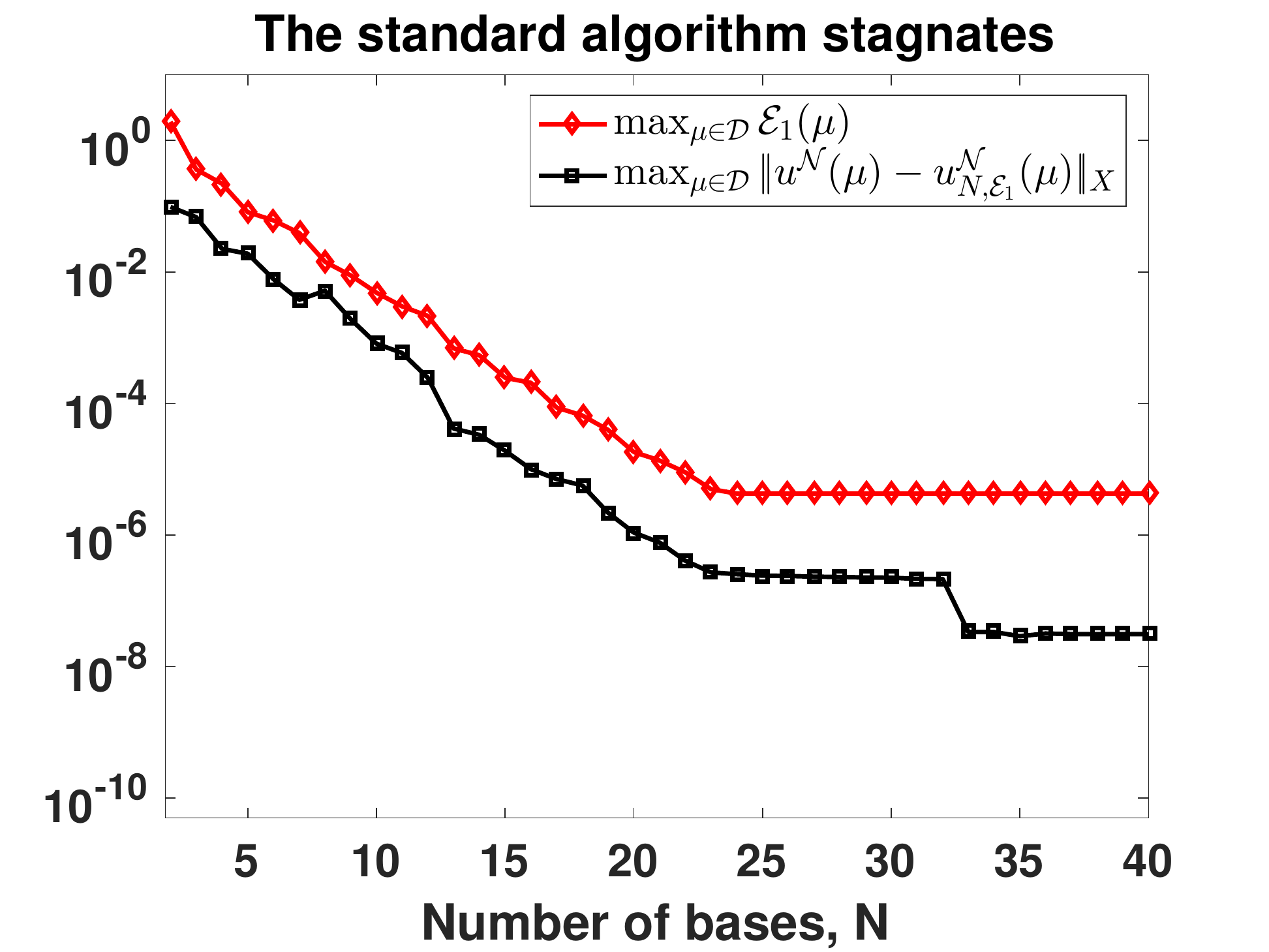}
  \includegraphics[width=0.49\textwidth]{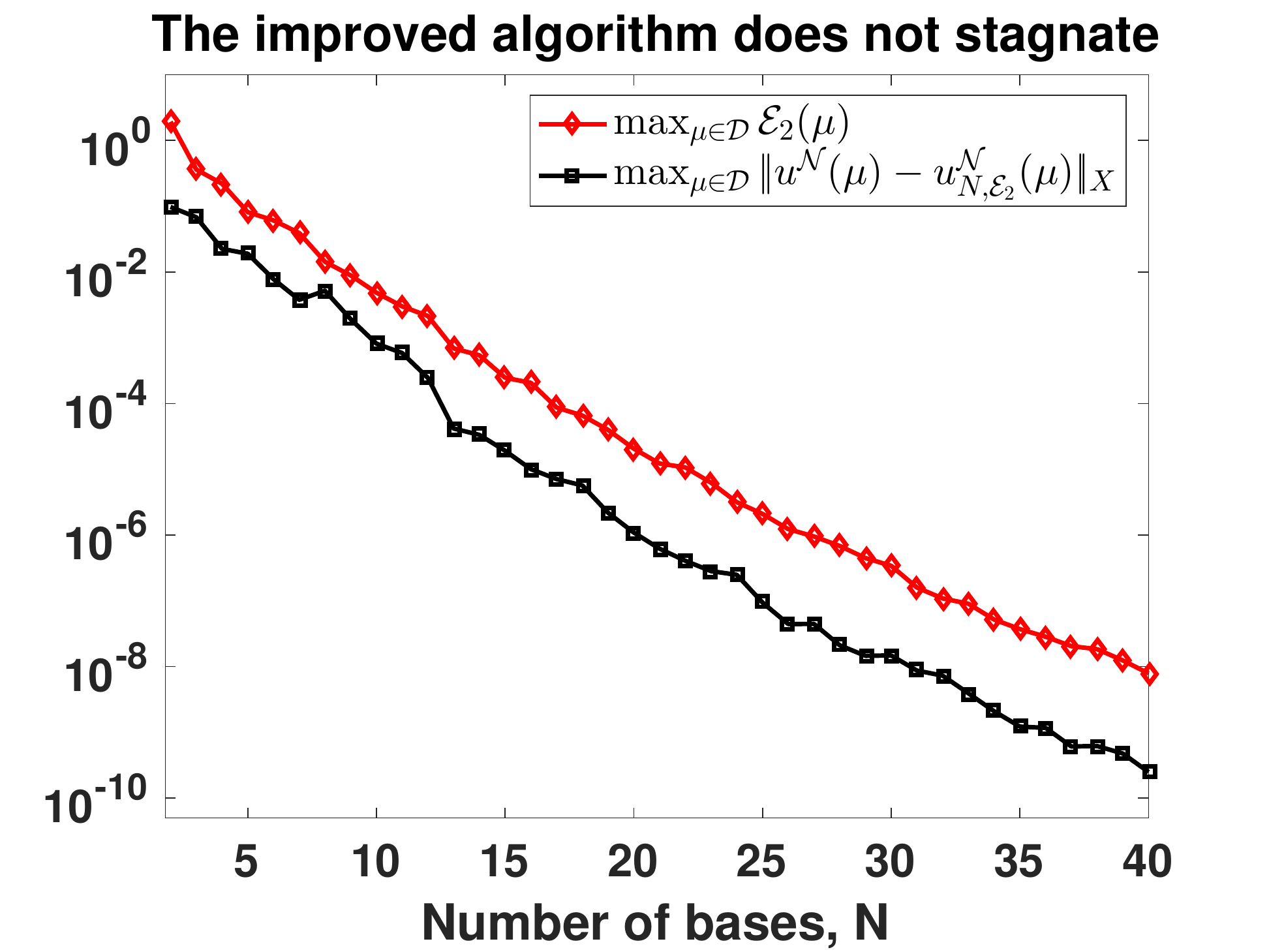}
  \includegraphics[width=0.49\textwidth]{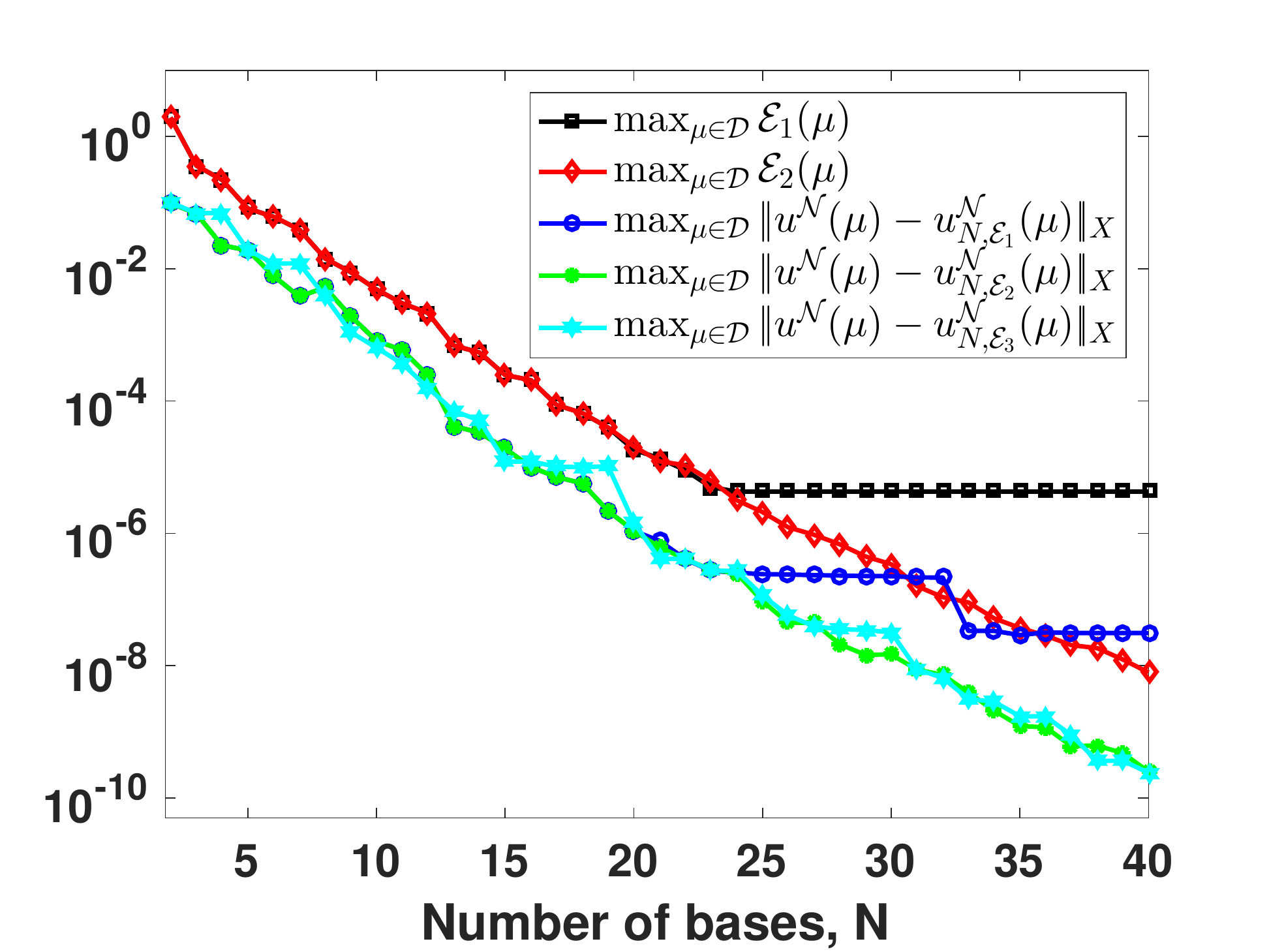}\\
  \includegraphics[width=0.49\textwidth]{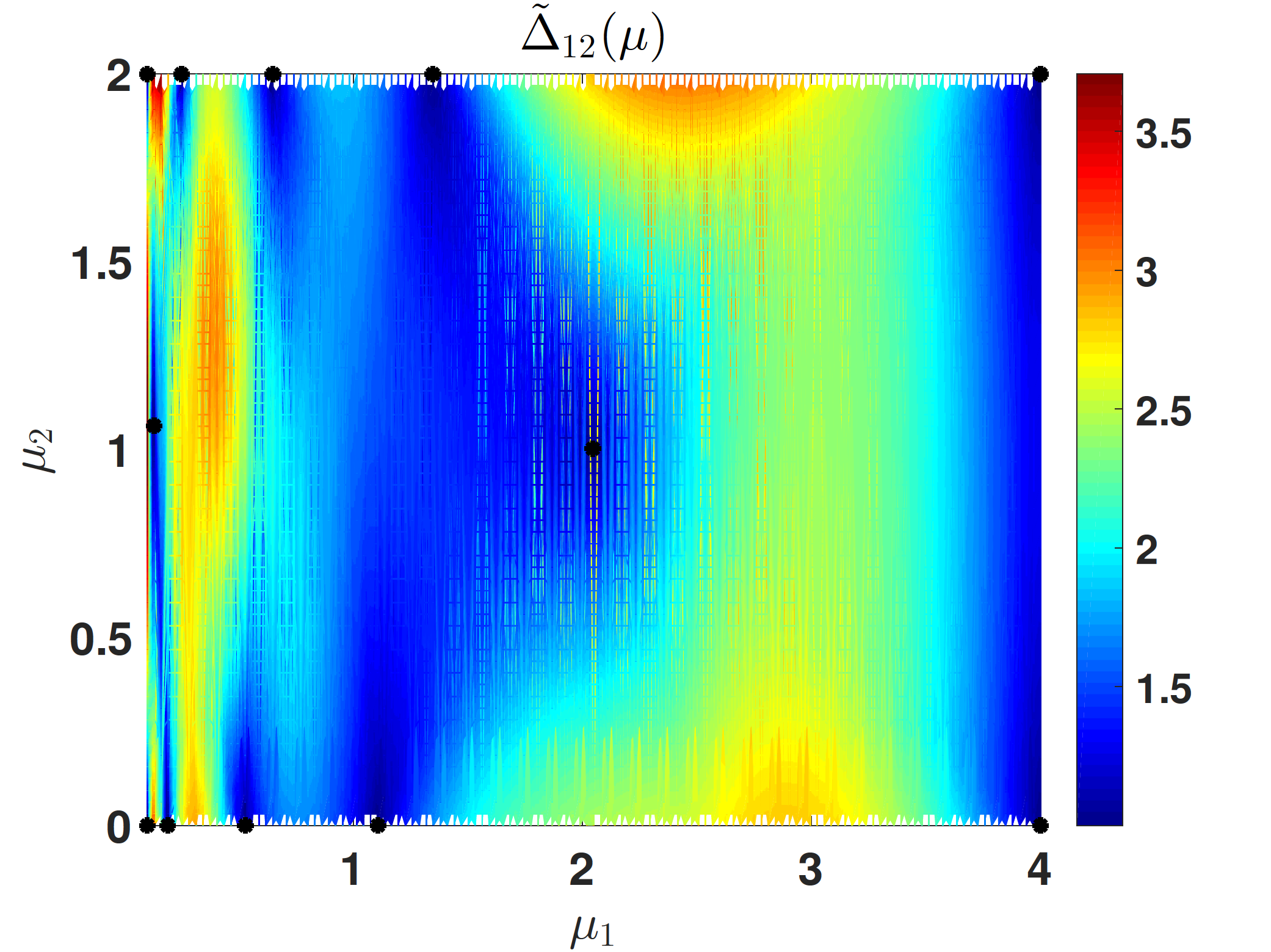}
  \includegraphics[width=0.49\textwidth]{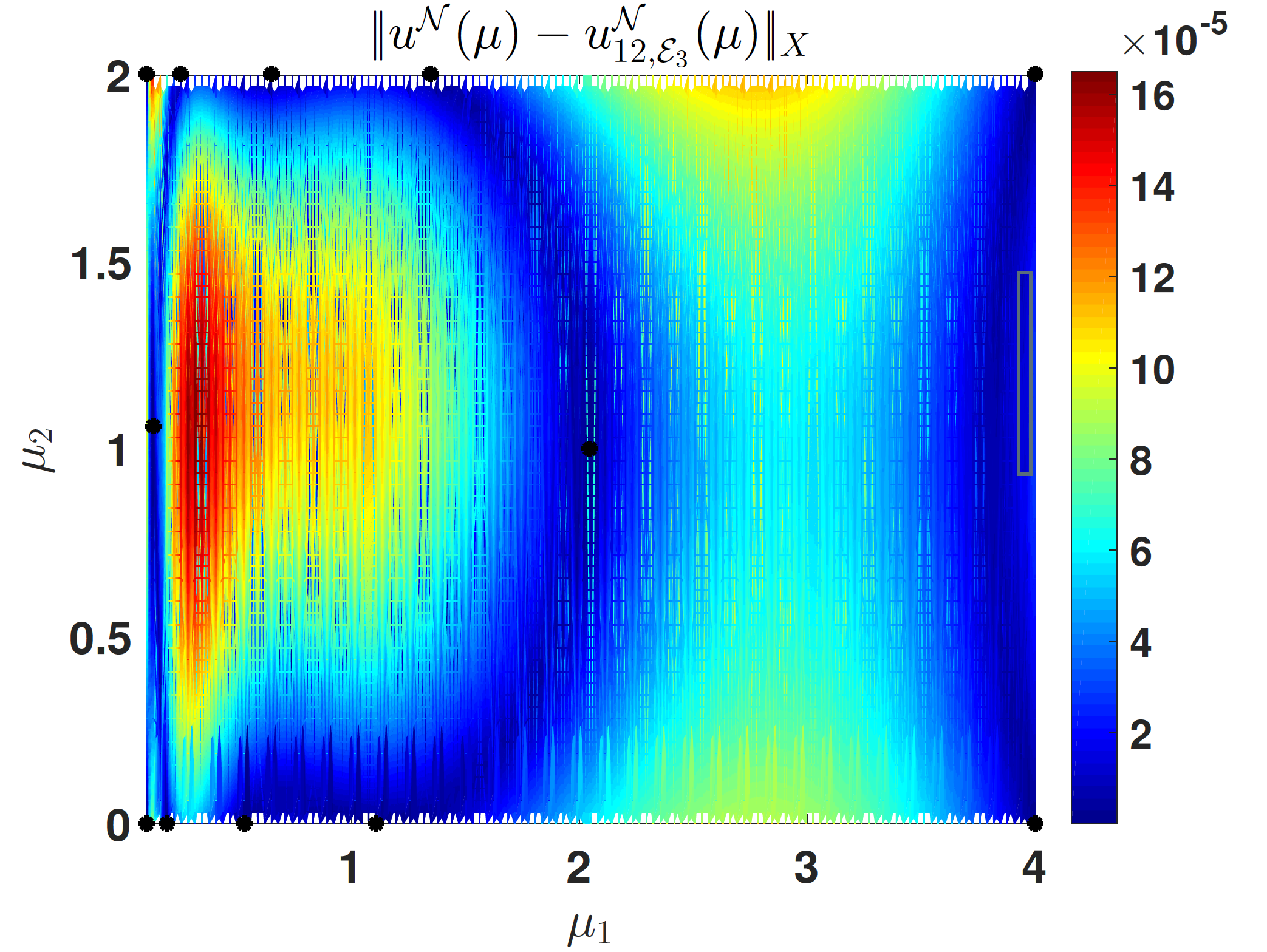}
\end{center}
  \caption{History of convergence for the classical and new approaches for equation \eqref{eq:2Ddiff_prob1}. The pictures on the bottom row indicate that residual-free error indicator {roughly} matches with the true error well. {Note that while error minima locations are successfully predicted, the maximum location(s) may be different.} }
\label{fig:2d_conv}
\end{figure}

We present the results in Figure \ref{fig:2d_conv}, and observe the same behavior as in the one-dimensional case. Namely, the error estimate and the true error stagnates when the traditional error estimate is adopted. This stagnation is eliminated { by the two} newly proposed approaches. 
What's more, the error indicator for the residual-free approach tracks the true error across the parameter domain as well as the one-dimensional case. 
It thus comes as no surprise that the third approach, albeit without a certificate, produces RB solution as accurate as the second method.

\begin{figure}
\begin{center}
  \includegraphics[width=0.9\textwidth]{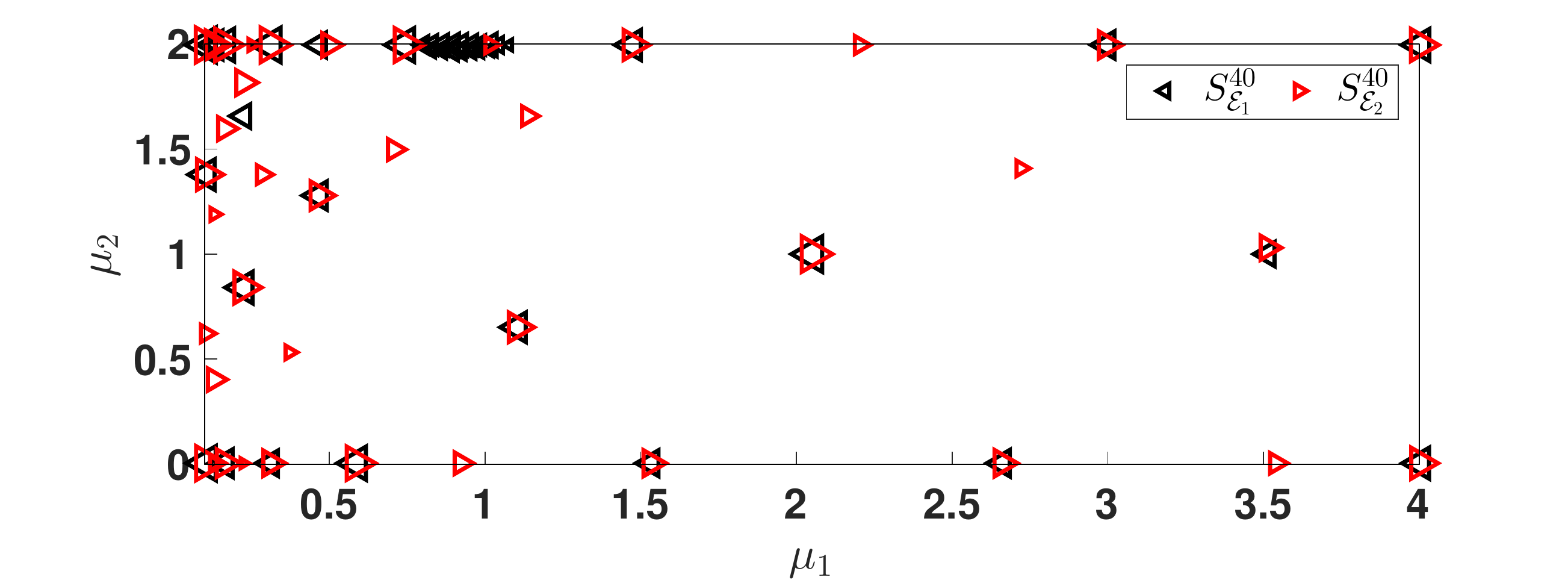}
  \includegraphics[width=0.9\textwidth]{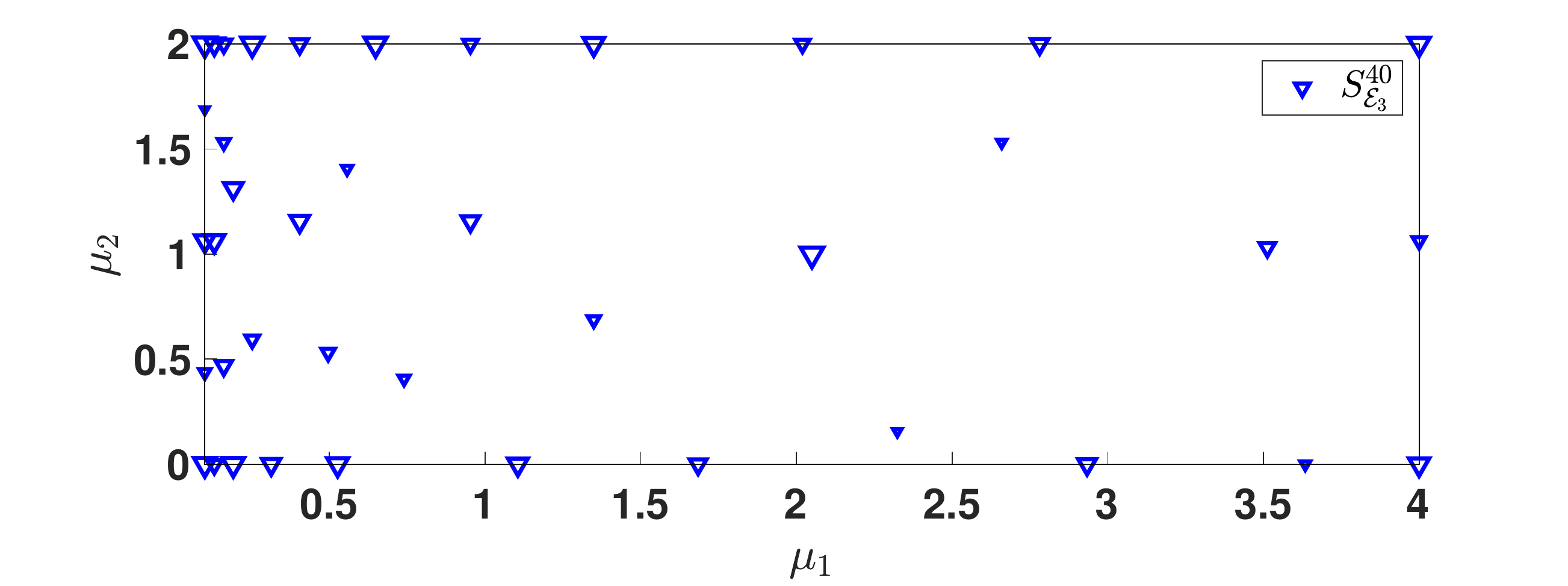}
  \includegraphics[width=0.9\textwidth]{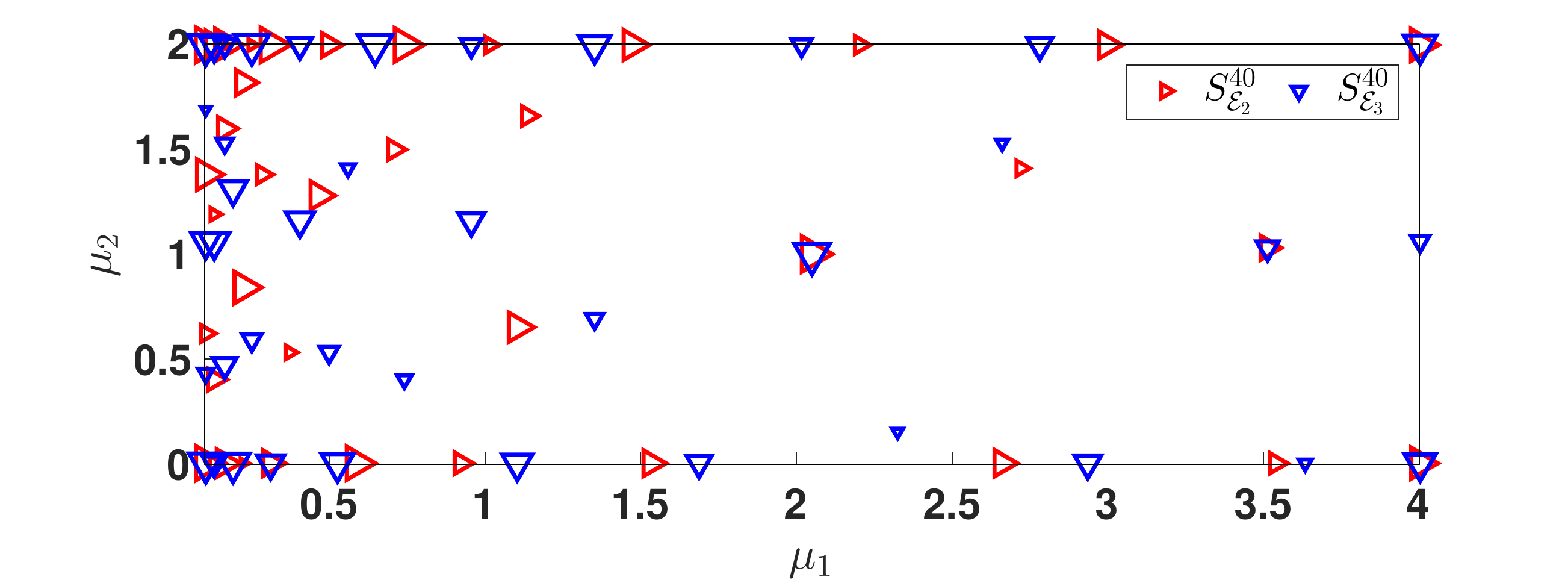}
\end{center}
\caption{Location of the chose snapshots for the classical approach and the two novel methods { for the second 2-dimensional test case}}
\label{fig:2d_points}
\end{figure}

We show in Figure \ref{fig:2d_points} the sets of snapshot locations $S^N$ for all three methods.  To differentiate them, we adopt the notation 
$S^N_{{\mathcal E}_i}$ for the sets produced by method $i$. We overlay the two sets $S^{40}_{{\mathcal E}_1}$ and $S^{40}_{{\mathcal E}_2}$ in the first picture. The larger the marker, the earlier it is picked by the greedy algorithm. Clearly, the two methods start by selecting the same points before deviating. The first method starts to clutter points in an unphysical manner and keeps doing so in the same neighborhood of the parameter domain. This leads to stagnation and potential singularity in the reduced solver. The enhanced approach ($S^{40}_{{\mathcal E}_2}$) avoids clustering and thus achieves better accuracy. The third, residual-free, approach demonstrates similar behavior.

\subsection{The second 2-dimensional test case}
The second two-dimensional example has an equation that induces a solution manifold that requires many more snapshots to achieve small error:
\begin{equation}
\label{eq:2Ddiff_prob2}
(1+\mu_{1}x)u_{xx} + (1+\mu_{2}y)u_{yy} = e^{4xy} \quad {\rm on} \quad \Omega.
\end{equation}
The parameter domain $\mathcal{D}$ here is taken to be $[-0.99, 0.99]^2$. The physical domain $\Omega$, boundary condition, truth solver and its resolution are all the same as the first two-dimensional case. We discretize $\calD$ using a tensorial $160 \times 160$ Cartesian grid with 160 equally-spaced points in each dimension. 

{The difficulty of this problem stems from the fact that the equation is close to degenerate at the four corners of the parameter domain. Thus the stability constant approaches zero toward the four corners, making accurate calculation of the residual norm even more critical. For example, the ratio \eqref{eq:error_estimator_en} blows up if the denominator (the stability constant) converges to zero while the numerator stays at the root machine accuracy level.}  The results are shown in Figure \ref{fig:results_p2} confirming, again, all previously stated properties for the two novel approaches. {The important role of an accurate residual norm calculation is apparent, as for example the chosen parameter values are unnecessarily more clustered toward the corners using the classical approach $\mathcal{E}_1$, see top row of Figure \ref{fig:results_p2}.}

\begin{figure}
\begin{center}
  \includegraphics[width=0.49\textwidth]{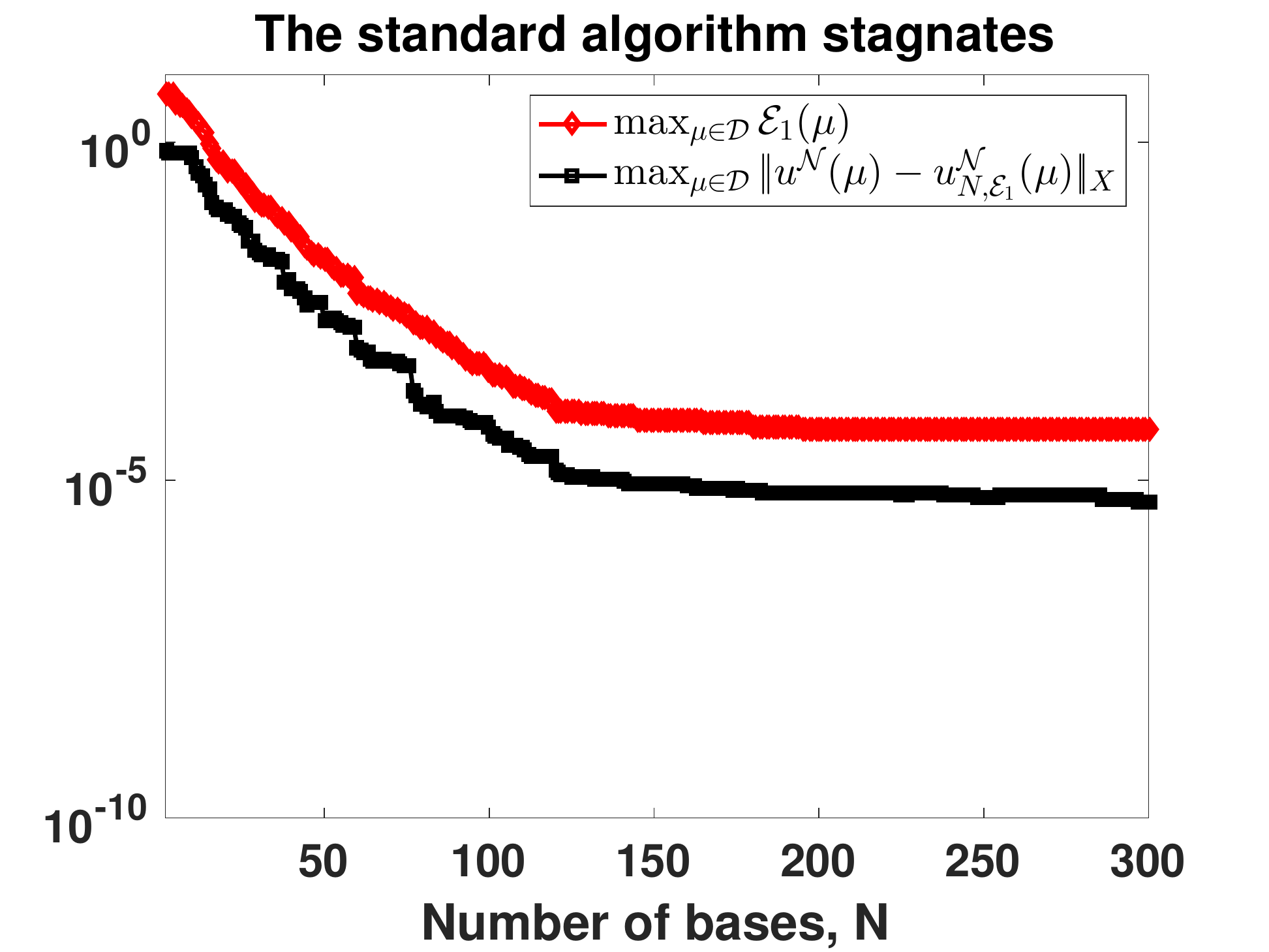}
  \includegraphics[width=0.49\textwidth]{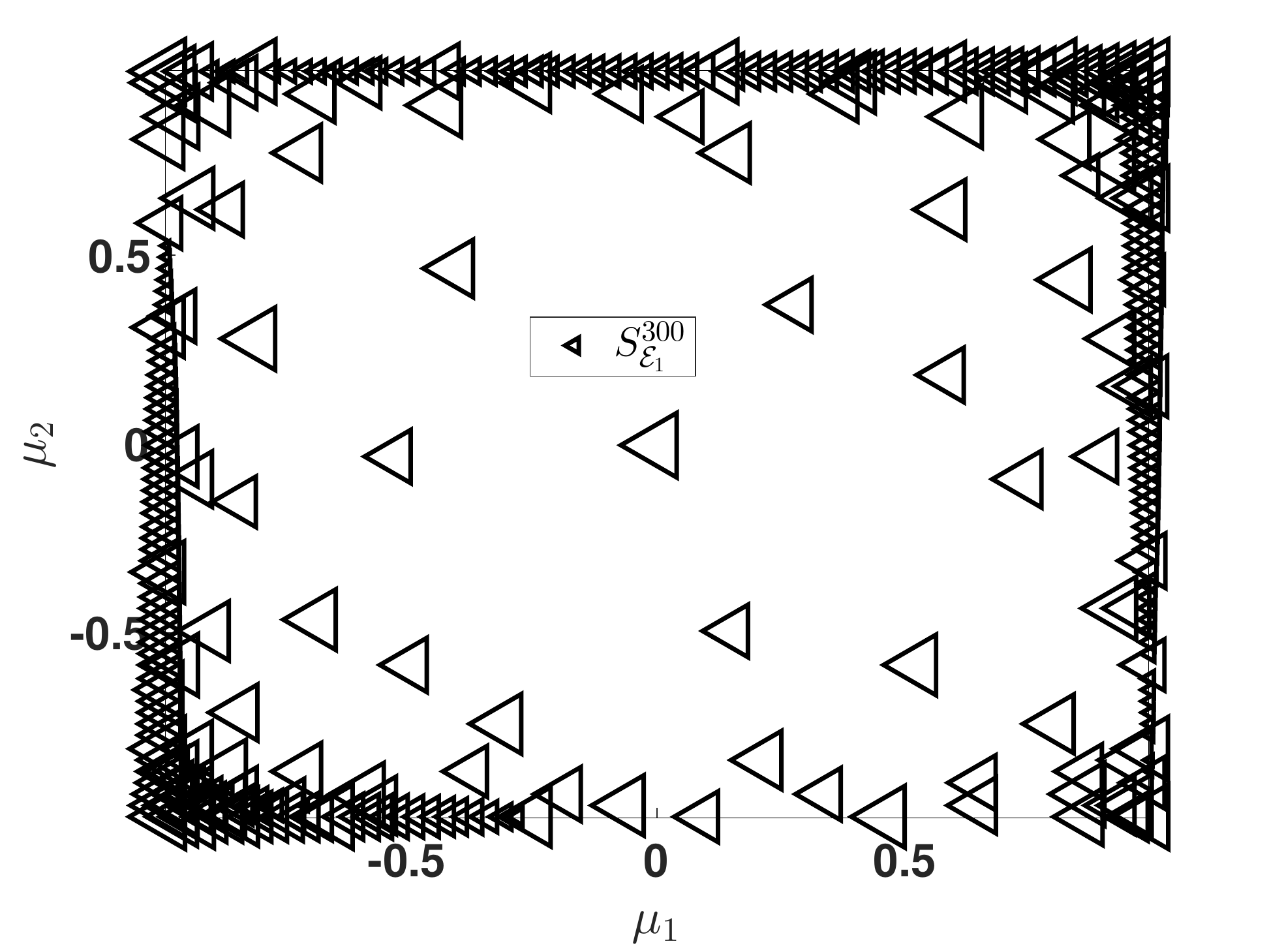}
    \includegraphics[width=0.49\textwidth]{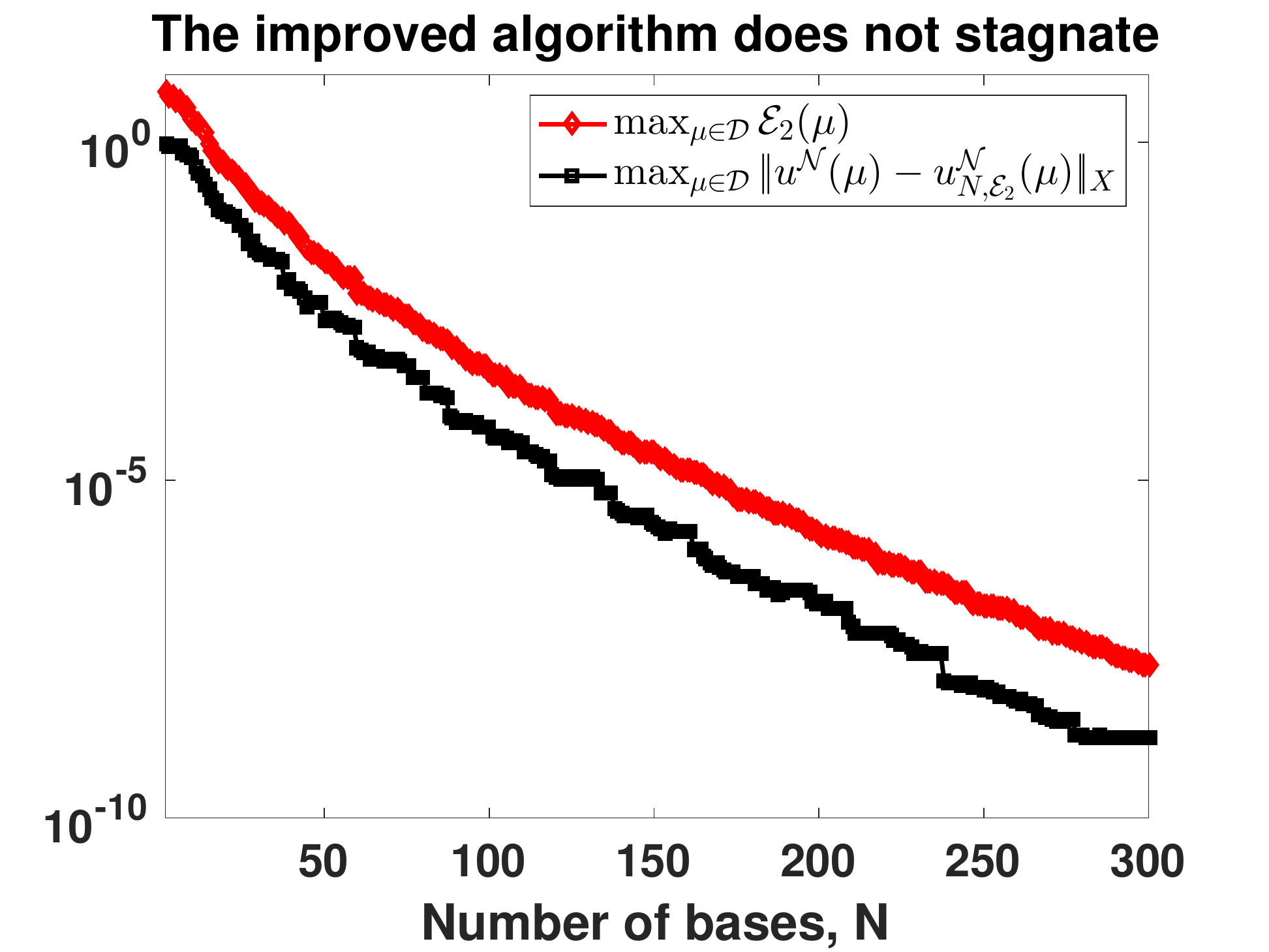}
  \includegraphics[width=0.49\textwidth]{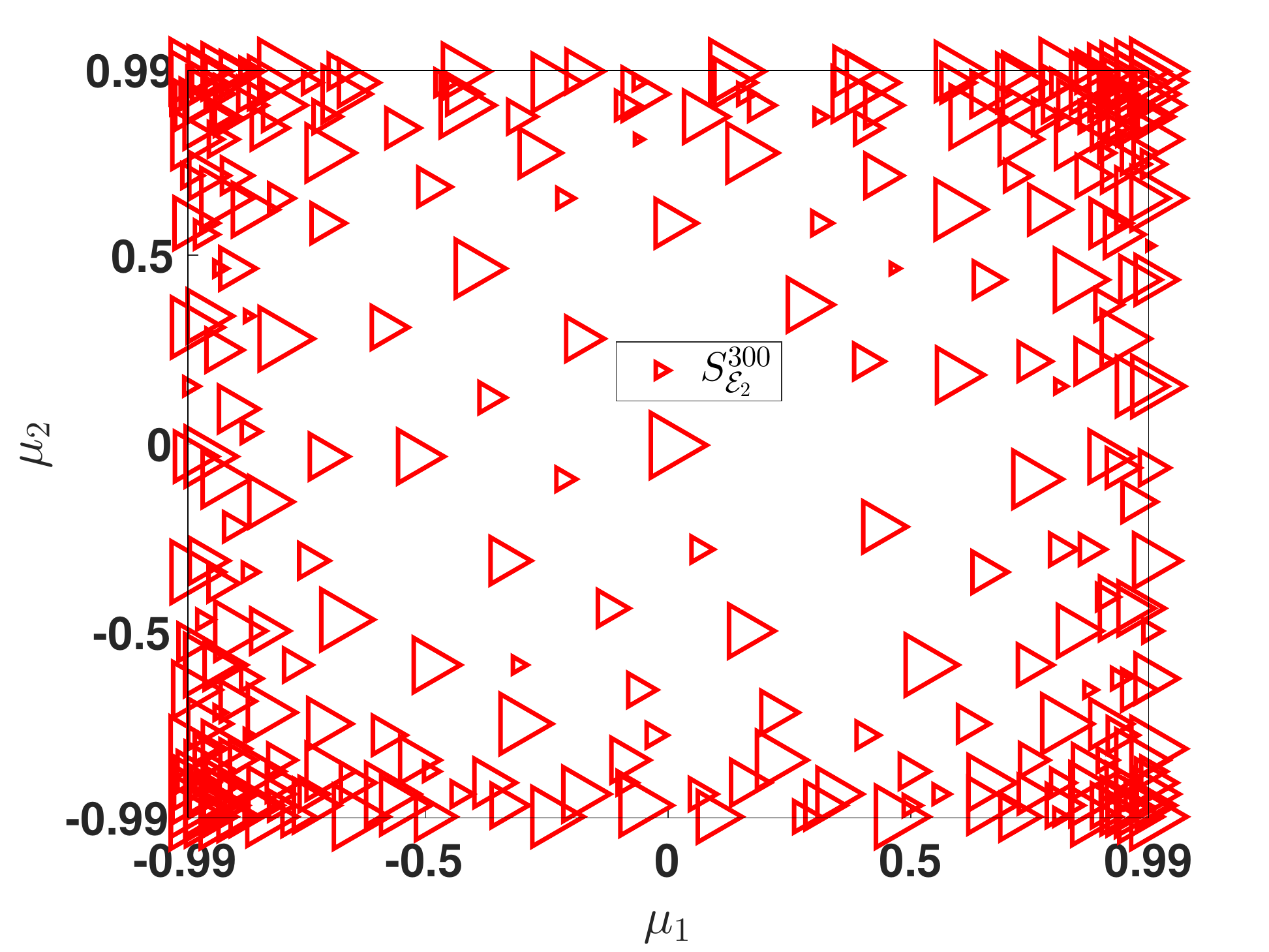}
    \includegraphics[width=0.49\textwidth]{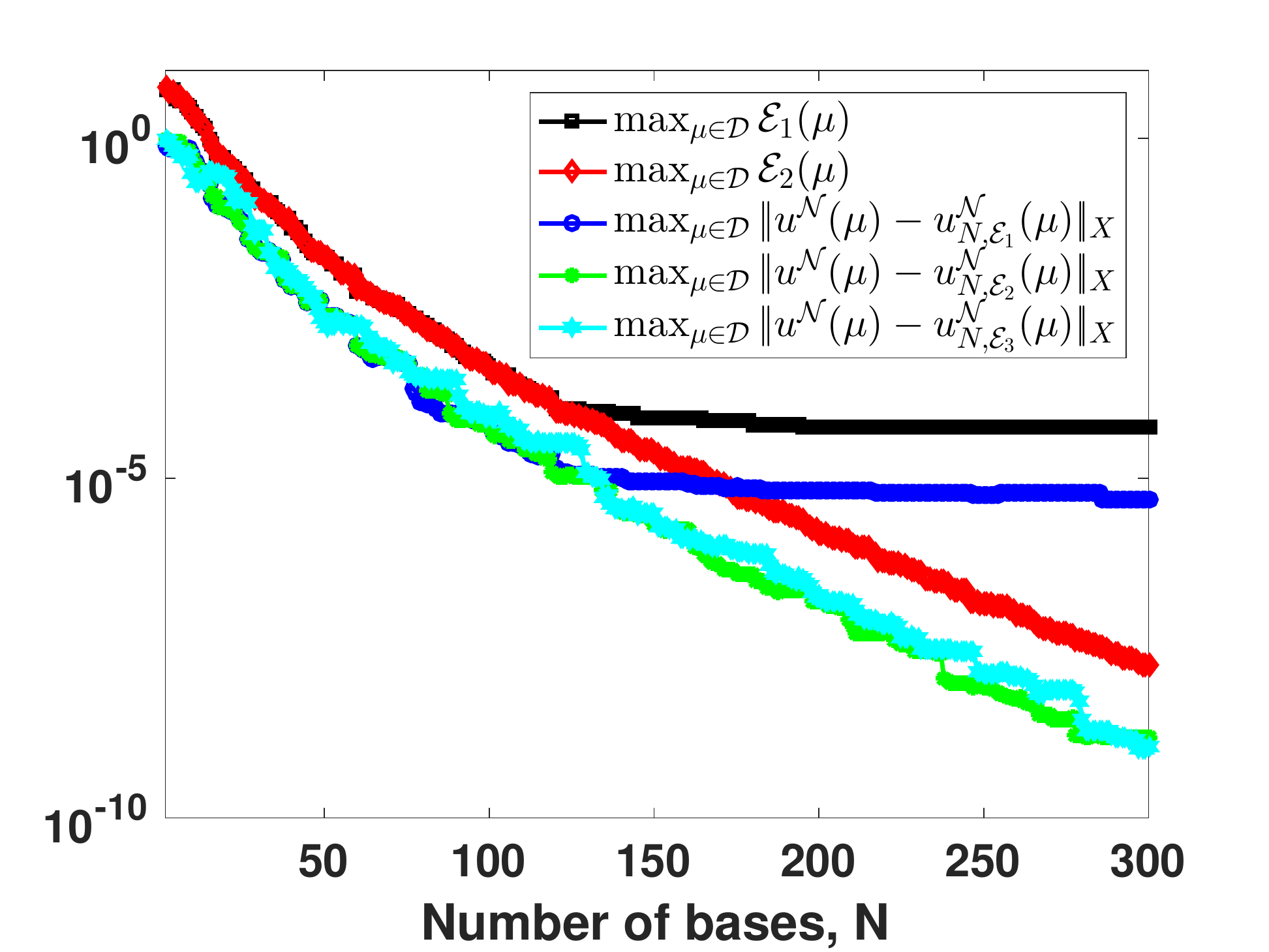}
  \includegraphics[width=0.49\textwidth]{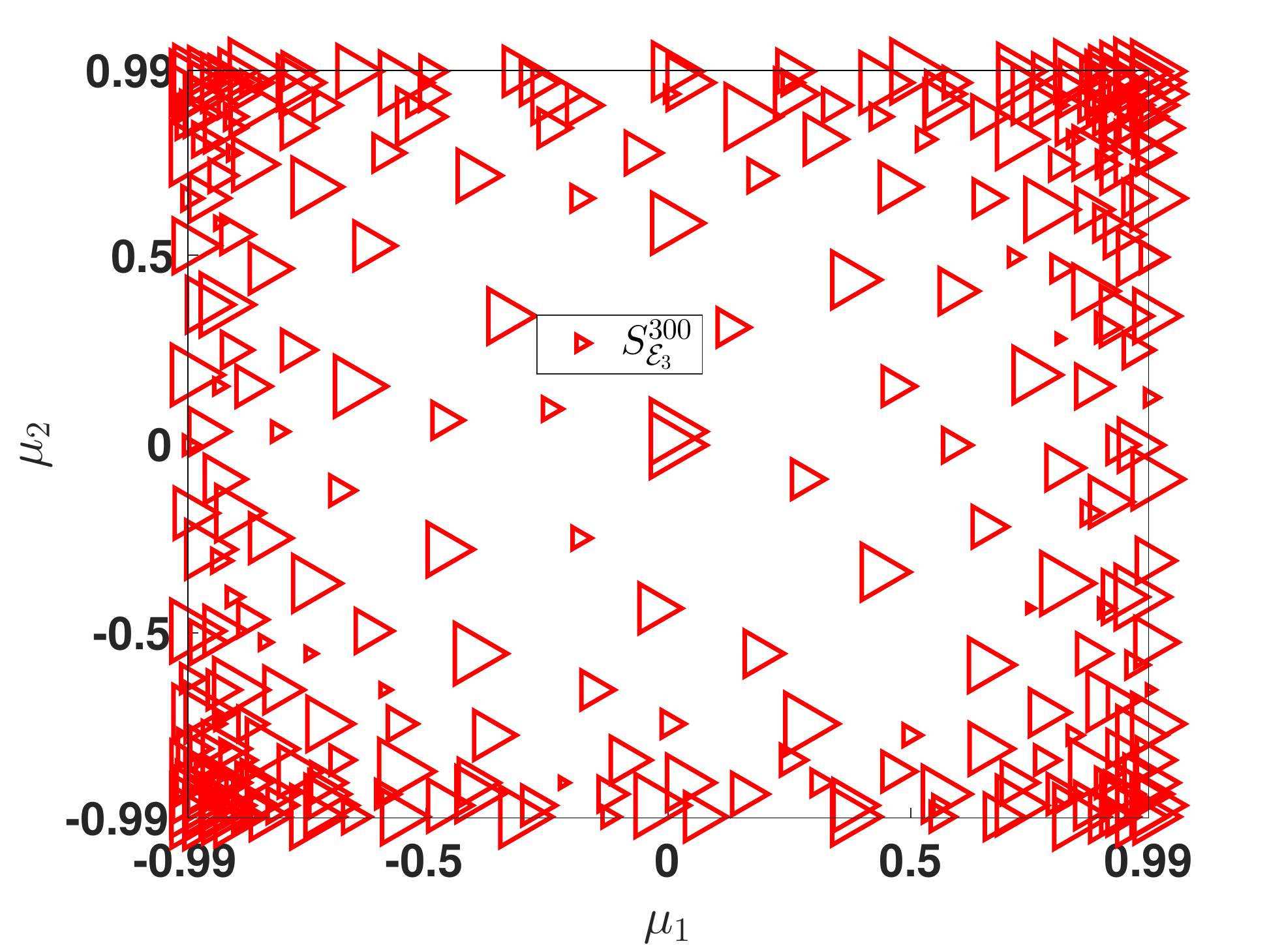}
\end{center}
\caption{The results for the second test problem.}
\label{fig:results_p2}
\end{figure}

\section{Concluding Remarks}
\label{sec:conclude}
We have proposed two novel strategies for computing objective functions in the offline greedy algorithm in the reduced basis method. Our first strategy is residual-based, and follows standard practice in RBM by defining the objective to be an \textit{a posteriori} upper bound for the error committed by a finite element method. This bound requires computation of a residual norm. In the standard RBM setting, this residual norm is computed in a way that can succumb to loss of significance when the magnitude of the norm reaches root machine precision. Our residual-based reformulation circumvents this premature stagnation without any loss in efficiency.

Our second strategy is residual-free, and uses only the RBM coefficients in the greedy objective. The particular function is the Lebesgue function from interpolation theory. We can provide a theoretical result guaranteeing that the parametric behavior of this function provides an upper bound for the parametric variation of the error, and thus is a good objective function for a greedy search. However, the precise connection between the Lebesgue function and the error involves a parameter-independent multiplicative constant that is an uncomputable best approximation error. Therefore, the residual-free method can effectively choose parameter values, but it cannot provide error certification without a quantitative understanding of this best approximation error. \annote{Furthermore, we currently lack a result establishing that the residual-free objective is a lower bound for the true error; the establishment of such a result is a subject of ongoing work.}

Our numerical experiments demonstrate that both of our strategies can effectively allow RBM to compute solutions to an accuracy much closer to machine precision than {the classical reduced basis error estimator}.

\bibliographystyle{abbrv}

\end{document}